\newtheorem{theorem}{Theorem}[section]
\newtheorem{lemma}[theorem]{Lemma}
\newtheorem*{thmA}{Theorem~A}
\newtheorem*{thmB}{Theorem~B}
\newtheorem*{example}{Examples}
\newtheorem*{question}{Question}
\theoremstyle{remark}
\DeclareMathOperator{\Irr}{Irr}
\DeclareMathOperator{\IBr}{IBr}
\DeclareMathOperator{\SL}{SL}
\newcommand{\cC}{{\mathcal C}}
\newcommand{\cD}{{\mathcal D}}
\newcommand{\bbZ}{{\mathbb Z}}
\newcommand{\Ker}{{\mathrm {Ker}}}
\newcommand{\Syl}{{\mathrm {Syl}}}
\DeclareMathOperator{\Sym}{S}
\DeclareMathOperator{\PSL}{PSL}
\newcommand{\cd}{{\mathrm {cd}}}
\newcommand{\OO}{\mathbf{O}}
\newcommand{\Centralizer}{\mathbf{C}}
\newcommand{\N}{\mathbf{N}}
\numberwithin{equation}{section}
\begin{document}

\title[Brauer characters of $q^\prime$-degree]{Brauer characters of $q^\prime$- degree}

\author[M. L. Lewis]{Mark L. Lewis}
\address{Department of Mathematical Sciences, Kent State University, Kent, OH 44242, USA}
\email{lewis@math.kent.edu}

\author[H. P. Tong-Viet]{Hung P. Tong Viet}
\address{Department of Mathematical Sciences, Kent State University, Kent, OH 44242, USA}
\email{htongvie@math.kent.edu}

\thanks{}

\subjclass[2010]{Primary 20C20; Secondary 20C15, 20B15}

\date{\today}

\date{\today}

\keywords{Brauer character degrees; derangements}

\begin{abstract}
We show that if $p$ is a prime and $G$ is a finite $p$-solvable group satisfying the condition that a prime $q$ divides the degree of no irreducible $p$-Brauer character of $G$, then the normalizer of some Sylow $q$-subgroup of $G$ meets all the conjugacy classes of $p$-regular elements of $G$.
\end{abstract}

\maketitle

\section{Introduction}

Throughout this paper, $G$ will be a finite group and $p$ will be a prime. Let $\Irr (G)$ be the set of all complex  irreducible characters of $G$ and  let $\IBr (G)$ be the set of irreducible $p$-Brauer characters of $G$.  The celebrated It\^{o}-Michler theorem says that $p$ does not divide $\chi (1)$ for all $\chi \in \Irr (G)$ if and only if $G$ has a normal abelian Sylow $p$-subgroup.

Many variations of this theorem have been proposed and studied in the literature.  See the recent survey paper by G. Navarro on this topic in \cite{N1}.  One might ask whether there is any version of It\^{o}-Michler theorem for Brauer characters of finite groups. Now let $q$ be a prime and assume that $q$ divides the degree of no irreducible $p$-Brauer character of $G.$  Indeed, it is known that if $q=p,$ then $G$ has a normal Sylow $q$-subgroup.  (See Theorem 3.1 of \cite{N1}.)

This raises the question of whether there exists a similar result when $q\ne p$.  In particular, in Problem 3.2 of \cite{N1}, Navarro asks when $G$ is a $p$-solvable group and $q$ divides the degree of no irreducible $p$-Brauer character, is it true that every $p$-regular conjugacy class of $G$ intersects the normalizer of a Sylow $q$-subgroup of $G?$  In our first result, we prove that this is true.

\begin{thmA}\label{th:main1}
Let $p$ be a prime and let $G$ be a finite $p$-solvable group. Let $q$ be a prime and suppose that $q$ divides the degree of no irreducible $p$-Brauer character of $G.$ Then every $p$-regular conjugacy class of $G$ meets $\N_G (Q),$ where $Q$ is a Sylow $q$-subgroup of $G.$
\end{thmA}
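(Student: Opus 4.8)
The plan is to recast the conclusion as a non-vanishing statement for a permutation character and then to feed the degree hypothesis into an induction on $|G|$. Let $G$ act by conjugation on $\Omega = \Syl_q(G)$, so that the associated permutation character is $\pi = \Ind_{\N_G(Q)}^G 1$ and $\pi(g)$ equals the number of Sylow $q$-subgroups normalized by $g$. A $p$-regular element $g$ meets $\N_G(Q)$ up to conjugacy exactly when it normalizes some $R \in \Omega$, i.e. when $\pi(g) \neq 0$; thus Theorem~A says precisely that no $p$-regular element is a derangement on $\Omega$. Since the restriction $\pi^0$ of $\pi$ to $p$-regular classes is a Brauer character agreeing with $\pi$ there, I would reformulate the goal as: $\pi^0$ vanishes at no $p$-regular class. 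The case $q=p$ is disposed of immediately, since then $G$ has a normal Sylow $q$-subgroup and $\N_G(Q) = G$, so I may assume $q \neq p$.

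Next I would separate the $q$- and $q'$-parts. Writing a $p$-regular $g = xy = yx$ with $x$ its $q$-part and $y$ its $q'$-part (both powers of $g$), one checks that a $q$-element normalizes $R \in \Omega$ if and only if it lies in $R$; hence $g$ normalizes $R$ iff $x \in R$ and $y \in \N_G(R)$. As every $q$-element lies in some Sylow $q$-subgroup, the entire obstruction concerns the compatible action of the $q'$-part $y$, and the natural move is to pass to $\Centralizer_G(x)$, which contains $y$ and has $x$ in its center. When $x \neq 1$ this is a proper subgroup and I would try to reduce the problem there by induction, the delicate point being to match Sylow $q$-subgroups of $\Centralizer_G(x)$ normalized by $y$ with Sylow $q$-subgroups of $G$ normalized by $g$. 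This isolates the essential case of a $q'$-element $g = y$.

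To exploit the hypothesis I would induct on $|G|$ using a minimal normal subgroup $N$, which is elementary abelian by $p$-solvability. Fong--Swan lets me lift each $\varphi \in \IBr(G)$ to $\chi \in \Irr(G)$ with $\chi(1) = \varphi(1)$ coprime to $q$, and Clifford theory relative to $N$ then translates the $q'$-degree condition into control over inertia indices and the $q$-parts of orbit sizes; this should let me either transport the hypothesis to $G/N$ and invoke induction (lifting a fixed Sylow $q$-subgroup back through the quotient) or else force $Q$ into a normal configuration, with the behavior of $\OO_{p'}(G)$ and $\OO_q(G)$ organizing the base cases. I expect the main obstacle to be exactly this inductive bookkeeping: the property ``$q$ divides no degree in $\IBr(G)$'' is not inherited in any automatic way by the centralizers $\Centralizer_G(x)$ or by the sections produced in the Clifford-theoretic reduction, so the crux is to show that enough of the degree information survives each reduction to guarantee a Sylow $q$-subgroup fixed by the $q'$-part.
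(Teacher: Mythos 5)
Your write-up is a plan rather than a proof, and the point you yourself flag as ``the crux'' --- showing that enough degree information survives the reductions --- is exactly the part that is missing, so there is a genuine gap. The reformulation via the permutation character $\pi=\Ind_{\N_G(Q)}^G 1$, the disposal of $q=p$, and the factorization of a $p$-regular element into its $q$-part and $q'$-part are all correct, but none of them engages the hypothesis. The route you then propose (pass to $\Centralizer_G(x)$ for the $q$-part $x$, or run Clifford theory over a minimal normal subgroup together with Fong--Swan) stalls precisely where you say it does: the condition ``$q\nmid\varphi(1)$ for all $\varphi\in\IBr(G)$'' is not inherited by centralizers, and Clifford theory over a minimal normal subgroup only controls inertia indices $|G:I_G(\theta)|$, which is not enough to produce a Sylow $q$-subgroup normalized by a given $q'$-element.

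The paper's proof supplies two ingredients that your outline would need. First, the structural results of Manz and Wolf (\cite{MW2}, restated as Lemma \ref{lem2}): $\OO^{q'}(G)$ is solvable and $G/\OO_{p,q}(G)$ has $q$-length at most $1$; after reducing to $\OO_p(G)=\OO_q(G)=1$ this forces $L=\OO^{q'}(G)=QK$ with $K=\OO_{q'}(L)$ a normal $q$-complement, which is the configuration in which your $q$-part/$q'$-part decomposition becomes usable. Second, and most importantly, the mechanism converting the degree hypothesis into a conjugacy-class statement: by the Clifford correspondence every $\theta\in\IBr(K)$ must be $Q$-invariant (otherwise inducing from the inertia group produces an irreducible Brauer character of degree divisible by $q$); hence $Q$ fixes every projective indecomposable character $\Phi_\mu$, and since the $\Phi_\mu$ span the class functions supported on the $p$-regular elements of $K$, the group $Q$ stabilizes every $p$-regular class of $K$; Glauberman's fixed-point lemma for the coprime action of $Q$ on $K$ then places an element of $\Centralizer_K(Q)$ in each such class. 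A Frattini-type conjugation argument finishes from there. Without something playing the role of this projective-indecomposable/Glauberman step, the inductive bookkeeping you describe cannot be completed.
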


The conclusion of Theorem A can be restated in the language of permutation group theory as follows. Let $H$ be a proper subgroup of a finite group $G.$ Following \cite{IKLM}, we say that an element $x\in G$ is an \emph{$H$-derangement} in $G$ if the conjugacy class $x^G$ containing $x$ does not meet $H.$ We write $\Delta_H(G)$ for the set of all $H$-derangements of $G.$ If $H$ is core-free in $G,$ then $G$ is a permutation group acting on the right coset space $\Omega=G/H$ with point stabilizer $H$ and $\Delta(G)=\Delta_H(G)$ is the set of all derangements or fixed-point-free elements of $G$ on $\Omega.$

A classical theorem due to Jordan \cite{Jordan} says that the set $\Delta_H(G)$ is non-empty. Notice that $$\Delta_H(G)=G\setminus \cup_{g\in G}H^g.$$  Derangements have many applications in topology and number theory (see \cite{Serre}). Derangements have been used in studying zeros of ordinary character theory. Now with this concept, Theorem A can be restated as follows.

\medskip
\emph{Let $p$ and $q$ be primes and let $G$ be a  finite $p$-solvable group and $Q$ a Sylow $q$-subgroup of $G.$ If $q$ divides the degree of no irreducible $p$-Brauer character of $G,$ then all $\N_G(Q)$-derangements of $G$ have order divisible by $p$.}

\medskip
As already mentioned in \cite{N1}, using the It\^{o}-Michler Theorem for Brauer characters and a result in \cite{FKS} which states that every finite permutation group of degree $>1$ contains a derangement of prime power order, it is easy to see that for a finite group $G,$ every $p$-Brauer character of $G$ has $p'$-degree if and only if every $\N_G(P)$-derangement of $G,$ for some Sylow $p$-subgroup $P$ of $G,$ has order divisible by $p.$ That is, Theorem A holds for all finite groups when $q=p.$ Unfortunately, this does not hold true when $q$ is different from $p$.  We will provide examples to show that the $p$-solvable assumption on $G$ in Theorem A is necessary.

Returning to Problem 3.2 of \cite{N1}, we note that Navarro asks for a characterization of $p$-solvable groups where $q \ne p$ and $q$ does not divide the degree of any irreducible $p$-Brauer character of $G$.  Our next goal is to obtain just such a characterization.  Manz and Wolf studied these groups in \cite{MW2}.  Many of the results of that paper can be found in Section $13$ of \cite{MW1}.  In Corollary $13.15$ of \cite{MW1}, they prove that ${\bf O}^{q'} (G)$ is solvable.  When $G$ has an abelian Sylow $q$-subgroup, it turns out that this additional condition is sufficient.  In particular, we prove the following.

\begin{thmB}\label{th:abelian1}
Let $p$ and $q$ be distinct primes and suppose that $G$ is $p$-solvable and a Sylow $q$-subgroup $Q$ of $G$ is abelian. Then $q\nmid\varphi(1)$ for all $\varphi\in\IBr(G)$ if and only if the following conditions hold:
\begin{enumerate}[$(1)$]
\item $x^G\cap \N_G(Q)\neq\emptyset$ for all $p$-regular elements $x\in G;$
\item $\OO^{q'}(G)$ is solvable;
\end{enumerate}
\end{thmB}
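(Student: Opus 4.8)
The forward direction is a synthesis of results already in hand. Condition $(1)$ is exactly the conclusion of Theorem~A applied to the hypothesis that $q$ divides no degree in $\IBr(G)$, and condition $(2)$ is the content of Corollary~13.15 of \cite{MW1}: under the same hypothesis on a $p$-solvable group, $\OO^{q'}(G)$ is solvable. So I would dispose of $(\Rightarrow)$ by citation.

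The substance is the converse: assuming $(1)$ and $(2)$, show that $q\nmid\varphi(1)$ for every $\varphi\in\IBr(G)$. The plan is first to pass to $N:=\OO^{q'}(G)$. Since $G/N$ is a $q'$-group, Clifford theory for Brauer characters shows that the $q$-part of a degree is already detected on $N$: for $\varphi\in\IBr(G)$ lying over $\theta\in\IBr(N)$ one has $\varphi(1)=e\,[G:I_G(\theta)]\,\theta(1)$, and both the orbit index $[G:I_G(\theta)]$ and the ramification $e$ divide $|G/N|$, hence are prime to $q$. Thus $q\nmid\varphi(1)$ for all $\varphi\in\IBr(G)$ if and only if $q\nmid\theta(1)$ for all $\theta\in\IBr(N)$, and it suffices to treat $N$. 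Here $Q\le N$ is still an abelian Sylow $q$-subgroup (as $q\nmid[G:N]$), $N$ is solvable by $(2)$, and a Frattini argument transfers $(1)$ to $N$: writing $G=N\,\N_G(Q)$, any $p$-regular $x\in N$ is $G$-conjugate into $\N_G(Q)\cap N=\N_N(Q)$, and because the normalizing factor stabilizes $\N_N(Q)$ the conjugating element may be taken in $N$.

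It remains to prove the backward implication for a \emph{solvable} group with abelian Sylow $q$-subgroup satisfying $(1)$. I would argue by induction on $|G|$, choosing a minimal normal subgroup $V$ (elementary abelian, an $r$-group) and analyzing $\IBr(G)$ over $V$ by Clifford theory. When $r=p$ one has $\IBr(V)=\{1_V\}$, so every $\varphi$ is inflated from $G/V$ and induction applies once $(1)$ is checked for $G/V$. When $r\ne p$ the characters of $V$ are ordinary, and for a non-principal $G$-orbit representative $\theta$ the degree factor $[G:I_G(\theta)]$ must be shown prime to $q$; this is exactly where condition $(1)$ enters, since the requirement that every $p$-regular class meet $\N_G(Q)$ constrains the $G$-action on $\Irr(V)$ and hence the admissible orbit lengths, while abelianness of $Q$ bounds the ramification inside the stabilizer. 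The case $I_G(\theta)<G$ reduces to a smaller group and the $G$-invariant case to a character-triple quotient, in both of which the inductive hypothesis is reinstated.

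The main obstacle is this solvable step, and within it the cases $r=q$ and the coprime case $r\notin\{p,q\}$: translating the purely conjugacy-theoretic hypothesis $(1)$ into the arithmetic statement that no orbit length and no ramification factor contributes a power of $q$. This is precisely the point at which the abelian-Sylow hypothesis is indispensable, since it is what promotes the necessary condition $(1)$ into a sufficient one; the delicate part is to extract from $(1)$ and the abelian structure of $Q$ the vanishing of every potential $q$-contribution, for which I would lean on the solvable-group machinery developed in \cite{MW1,MW2}.
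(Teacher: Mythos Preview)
Your forward direction and the reduction to $\OO^{q'}(G)$ are correct and match the paper (these are Lemma~\ref{lem1} and Lemma~\ref{lem6}). The issue is the solvable core, which you flag but do not resolve, and the paper handles it by a different and more direct route than the minimal-normal-subgroup induction you sketch.

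After reducing to $G=\OO^{q'}(G)$ solvable with $\OO_p(G)=1$, the paper invokes the Hall--Higman centralizer lemma: since $Q$ is abelian, $[Q,\OO_{q',q}(G)]\le\OO_{q'}(G)$, so $Q\le\OO_{q',q}(G)$; combined with $G=\OO^{q'}(G)$ this gives $G=QK$ with $K=\OO_{q'}(G)$ a normal $q$-complement. Condition~(1) then says every $p$-regular $x\in K$ satisfies $x^K\cap\Centralizer_K(Q)\ne\emptyset$, and the key Lemma~\ref{lem4} converts this into the statement that $Q$ fixes every $\theta\in\IBr(K)$. The mechanism is: Glauberman's coprime fixed-point lemma shows $Q$ stabilizes each $p$-regular class of $K$ setwise, hence each characteristic function of such a class; since $\{\Phi_\mu:\mu\in\IBr(K)\}$ is a basis for class functions on $K^\circ$, $Q$ fixes each $\Phi_\mu$ and therefore each $\mu\in\IBr(K)$. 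Every $\theta\in\IBr(K)$ then extends to $G$ by coprime extension, and Gallagher plus $G/K\cong Q$ abelian finishes.

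Your inductive scheme has a difficulty beyond the one you acknowledge. The claim that ``the inductive hypothesis is reinstated'' when $I_G(\theta)<G$ is not justified: condition~(1) concerns $G$-conjugacy into $\N_G(Q)$ and there is no mechanism for transferring it to a non-normal subgroup such as $I_G(\theta)$. Likewise, the step ``$(1)$ constrains the $G$-action on $\Irr(V)$ and hence the admissible orbit lengths'' is exactly the Glauberman/projective-indecomposable duality argument above, and without making it explicit you have no way to force $q\nmid[G:I_G(\theta)]$. The paper's single coprime factorization $G=QK$ and the one-shot invariance argument for all of $\IBr(K)$ sidestep both obstacles; this, rather than generic ``solvable-group machinery'' from \cite{MW1,MW2}, is the missing idea.
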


Now, if we could prove that $G$ has to have an abelian Sylow $q$-subgroup, then this would answer Navarro's question.  However, it is not difficult to find examples where the Sylow $q$-subgroups are not abelian.   We will present a characterization for $p$-solvable groups $G$ that have $q \nmid \varphi (1)$ for all $\varphi \in \IBr (G)$ for primes $p \ne q$, however it is so technical that we delay the statement of the result until later (see Theorem \ref{th:characterization}).

\section{Proof of Theorem A}

Throughout this section, $p$ and $q$ are distinct primes and $G$ is a finite group. Suppose that $q$ divides the degree of no irreducible $p$-Brauer character of $G$ and let $N$ be a normal subgroup of $G.$ Since $\IBr(G/N)\subseteq\IBr(G)$ and by \cite[Corollary~8.7]{Navarro}, we see that both $N$ and $G/N$ satisfy this property.  In particular,  all $p$-Brauer characters of $\OO^{q'}(G)$ have $q'$-degree.  We show the converse of this holds when $G/N$ is $p$-solvable.

\begin{lemma}\label{lem1} Let $p$ and $q$ be distinct primes and
suppose that $G/\OO^{q'}(G)$ is $p$-solvable. Then $q\nmid \varphi(1)$ for all $\varphi\in\IBr(G)$ if and only if $q\nmid \beta(1)$ for all $\beta\in\IBr(\OO^{q^\prime}(G)).$
\end{lemma}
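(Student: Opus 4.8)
The plan is to set $N=\OO^{q'}(G)$ and to exploit two structural features of this subgroup: by definition of $\OO^{q'}$ the index $[G:N]$ is a $q'$-number, and $N$ contains every Sylow $q$-subgroup of $G$. The forward implication is the easy one and is essentially the observation already recorded just before the lemma: every $\beta\in\IBr(N)$ is a constituent of $\varphi_N$ for some $\varphi\in\IBr(G)$, and Clifford theory for Brauer characters gives $\beta(1)\mid\varphi(1)$, so $q\nmid\varphi(1)$ forces $q\nmid\beta(1)$. All of the work is therefore in the converse, and I would concentrate on that.

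For the converse, fix $\varphi\in\IBr(G)$ and let $\beta\in\IBr(N)$ be a constituent of $\varphi_N$ with inertia group $T=I_G(\beta)$. By the Clifford correspondence for Brauer characters there is a unique $\psi\in\IBr(T\mid\beta)$ with $\psi^G=\varphi$, and then
\[
\varphi(1)=[G:T]\,\psi(1)=[G:T]\,e\,\beta(1),\qquad e:=\psi(1)/\beta(1).
\]
Here $[G:T]$ divides $[G:N]$ and is thus a $q'$-number, and $q\nmid\beta(1)$ by hypothesis, so it remains only to prove $q\nmid e$; this is the crux of the argument.

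To control $e$ I would pass to a character triple isomorphism $(T,N,\beta)\cong(T^*,N^*,\beta^*)$ in which $N^*$ is central in $T^*$ and $\beta^*$ is linear, with $T^*/N^*\cong T/N$. Such an isomorphism preserves degree ratios, so the constituent $\psi$ corresponds to some $\psi^*\in\IBr(T^*\mid\beta^*)$ with $e=\psi(1)/\beta(1)=\psi^*(1)/\beta^*(1)=\psi^*(1)$. Now $T/N$ is a subgroup of the $p$-solvable group $G/N$, hence $p$-solvable, and it is a $q'$-group because $[G:N]$ is a $q'$-number; consequently $T^*$, being a central extension of the $p$-solvable group $T/N$ by the abelian group $N^*$, is itself $p$-solvable. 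By the Fong--Swan theorem $\psi^*$ lifts to an ordinary $\hat\psi\in\Irr(T^*)$ with $\hat\psi(1)=\psi^*(1)$, and since $N^*\le Z(T^*)$ we get $\psi^*(1)=\hat\psi(1)\mid[T^*:N^*]=[T:N]$, which is a $q'$-number. Hence $q\nmid e$, and therefore $\varphi(1)=[G:T]\,e\,\beta(1)$ is a product of $q'$-numbers.

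The main obstacle is exactly the ramification term $e$: neither hypothesis bounds it directly, and $G$ itself is not assumed $p$-solvable, so Fong--Swan cannot be applied to $G$ or even to $N$. The idea that unlocks the proof is that after the central reduction the relevant group $T^*$ becomes $p$-solvable — its only potentially non-$p$-solvable part, $T/N$, is a subquotient of the $p$-solvable $G/N$ — which lets me trade the Brauer-theoretic ramification for an ordinary character degree and then bound it by the $q'$-index $[T:N]$ using the central subgroup $N^*$. The one point I would verify carefully is that Navarro's development of Brauer character triples does furnish a degree-ratio-preserving isomorphism with $N^*$ central and $\beta^*$ linear, since the entire reduction hinges on it.
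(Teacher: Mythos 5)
Your proof is correct, and both directions match what the paper needs; the difference is one of packaging. The paper's entire converse argument is a one-line appeal to Navarro's Theorem~8.30, which states that if $N\unlhd G$ with $G/N$ $p$-solvable, $\varphi\in\IBr(G)$ and $\theta\in\IBr(N)$ lies under $\varphi$, then $\varphi(1)/\theta(1)$ divides $|G:N|$; since $|G:\OO^{q'}(G)|$ is a $q'$-number, the conclusion is immediate. What you have done is reprove that divisibility from scratch in the case at hand: Clifford correspondence to isolate the ramification $e$, a modular character triple isomorphism to make $N^*$ central and $\beta^*$ linear, Fong--Swan applied to the now $p$-solvable group $T^*$, and the bound $\hat\psi(1)\mid[T^*:N^*]$ for a central subgroup. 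This is in fact essentially the standard proof of Theorem~8.30 itself, so your argument is sound and self-contained where the paper's is a citation. The one point you flag as needing verification does hold: Navarro's Theorem~8.28 supplies a modular character triple isomorphism $(T,N,\beta)\cong(T^*,N^*,\beta^*)$ with $N^*$ a central $p'$-subgroup of $T^*$ (so $\beta^*$ is linear), and character triple isomorphisms preserve the ratios $\psi(1)/\beta(1)$ by definition. The only cost of your route is length; the only cost of the paper's is opacity about where the $p$-solvability of $G/\OO^{q'}(G)$ is actually used, which your argument makes explicit (it is needed so that Fong--Swan applies to $T^*$).
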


\begin{proof} By the discussion above, it suffices to show that  if all irreducible $p$-Brauer characters of $L:=\OO^{q^\prime}(G)$ have $q'$-degree, then $q\nmid \varphi(1)$ for all $\varphi\in\IBr(G)$. Let $\varphi\in\IBr(G)$ and let $\theta\in\IBr(L)$ be an irreducible constituent of $\varphi_L.$ By \cite[Theorem~8.30]{Navarro}, we have ${\varphi(1)}/{\theta(1)}$ divides $|G/L|.$ Since $G/L$ is a $q'$-group and $q\nmid \theta(1)$ by our assumption, we deduce that $q\nmid \varphi(1).$
\end{proof}

In Theorem $13.8$ of \cite{MW1}, Manz and Wolf prove in a $q$-series for $G$ where $G$ is $p$-solvable and $q$ divides the degree of no irreducible $p$-Brauer character of $G$, the $q$-factors are abelian that the $q$-length of $G/{\bf O}_{p,q} (G)$ is at most $1$ and that the Sylow $q$-subgroups are metabelian.  We next formally state the results of Theorem 13.8 and Corollary 13.15 of \cite{MW2}.

\begin{lemma}\label{lem2} Let $p$ and $q$ be distinct primes and let $G$ be a finite $p$-solvable group. Suppose that $q\nmid \varphi(1)$ for all $\varphi\in\IBr(G).$ Then the following hold.

\begin{enumerate}
\item $\OO^{q'}(G)$ is solvable and so $G$ is $q$-solvable.
\item In each $q$-series of $G,$ the $q$-factors are abelian and the Sylow $q$-subgroup of $G$ is metabelian.
\item $G/\OO_{p,q}(G)$ has $q$-length at most $1.$
\end{enumerate}

\end{lemma}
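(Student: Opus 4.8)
The plan is to deduce all three assertions from the structure theory of $p$-solvable groups developed by Manz and Wolf, making essential use of a fact already recorded in the discussion preceding Lemma~\ref{lem1}: the hypothesis that $q\nmid\varphi(1)$ for all $\varphi\in\IBr(G)$ is inherited by every normal subgroup and every quotient of $G$ (via $\IBr(G/N)\subseteq\IBr(G)$ and \cite[Corollary~8.7]{Navarro}). This inheritance is exactly what makes an inductive, minimal-counterexample attack feasible, since passing to a chief factor or to a quotient preserves the Brauer-degree condition.

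For part (1) I would argue by contradiction on a group $L:=\OO^{q'}(G)$ of minimal order that is non-solvable. First note that $L$ is generated by $q$-elements, so $\OO^{q'}(L)=L$ and, by the same token, every nontrivial quotient of $L$ has order divisible by $q$. Factoring out the solvable radical $R$, the group $L/R$ has trivial Fitting subgroup and socle a direct product of non-abelian simple groups; since $G$ is $p$-solvable each such factor $S$ has order prime to $p$, so $\IBr(S)=\Irr(S)$. Using that $L/R$ is $q$-divisible and embeds in the automorphism group of its socle, one reduces to the case where some simple factor $S$ itself has order divisible by $q$; the simple-group form of the It\^{o}--Michler theorem (a consequence of the classification) then supplies an irreducible character of $S$ of degree divisible by $q$, and Clifford theory lifts this through the chief series to produce some $\varphi\in\IBr(G)$ with $q\mid\varphi(1)$, a contradiction. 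The delicate point here is controlling the contribution of $q$ that could lie in outer automorphisms or in the permutation of the simple factors rather than in the factors themselves; this is precisely the kind of case analysis that Manz--Wolf carry out. Once $L$ is solvable, $G/L$ is a $q'$-group, whence $G$ is $q$-solvable.

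For parts (2) and (3) I would work inside the now $q$-solvable group and study how a Sylow $q$-subgroup $Q$ acts on the $q'$-sections appearing in a $q$-series. The condition $q\nmid\varphi(1)$ translates into a strong restriction on the orbit sizes of $Q$ on the relevant Brauer characters (equivalently, on suitable faithful modules over the intervening $q'$-sections), and a Hall--Higman-type analysis of these actions is what bounds the derived length of the $q$-factors, forces metabelianness of $Q$, and yields $\ell_q\bigl(G/\OO_{p,q}(G)\bigr)\le 1$ by examining the action across $\OO_{p,q}(G)$. The main obstacle---and the reason these are exactly Theorem~13.8 and Corollary~13.15 of \cite{MW1} (from \cite{MW2})---is this quantitative step: converting a single divisibility condition on Brauer degrees into derived-length and $q$-length bounds requires the full character-theoretic machinery for solvable groups, so my proposal is to verify that our hypotheses match theirs and to invoke these results directly rather than reprove them.
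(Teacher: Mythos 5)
Your proposal ends exactly where the paper does: Lemma~\ref{lem2} is stated in the paper without proof, as a formal restatement of Theorem~13.8 and Corollary~13.15 of Manz--Wolf (\cite{MW2}, see also \cite{MW1}), which is precisely the citation you invoke after checking that the hypotheses match. The preliminary sketch you give of how those results are proved is reasonable but not part of the paper's argument, so in substance your approach coincides with the paper's.
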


Fix a prime $p$. Let $G$ be a finite group and let $H$ be a proper subgroup of $G.$ For brevity, we say that the pair $(G,H)$ has property $\mathcal{D}_p$ if $x^G\cap H$ is not empty for all $p$-regular elements $x\in G$ or equivalently all $H$-derangements of $G$ have order divisible by $p.$

The following slightly generalizes Lemma 4.2 in \cite{BT}.

\begin{lemma}\label{lem:normal}
Let $H$ be a  proper subgroup of a finite group $G$ and $L$ be a normal subgroup of $G$ such that $G=HL$.  If $T$ is a proper subgroup of $L$ containing $H\cap L$,
then $\Delta_T(L)\subseteq \Delta_H(G)$.
\end{lemma}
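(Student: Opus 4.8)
The plan is to prove the contrapositive at the level of elements. Since $\Delta_T(L)\subseteq L$ by definition, only $y\in L$ are relevant, so it suffices to take $y\in L$ that fails to be an $H$-derangement of $G$ and to show that $y$ fails to be a $T$-derangement of $L$. Concretely: assuming that some $G$-conjugate of $y$ lies in $H$, I will produce an $L$-conjugate of $y$ lying in $T$, which forces $y\notin\Delta_T(L)$ and hence gives $\Delta_T(L)\subseteq\Delta_H(G)$.

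The first step is to exploit the hypothesis $G=HL$ (equivalently $G=LH$, since both factors are subgroups) to reduce the conjugating element to one lying in $L$. Suppose $y^g\in H$ for some $g\in G$, and write $g=\ell h$ with $\ell\in L$ and $h\in H$. Then $y^g=(y^\ell)^h$, and because conjugation by $h\in H$ stabilises $H$, the condition $y^g\in H$ is equivalent to $y^\ell\in H$. Thus, after replacing $g$ by $\ell$, I may assume the element conjugating $y$ into $H$ lies in $L$. The second step then combines normality with the containment $H\cap L\le T$: since $L\trianglelefteq G$ and $y\in L$, the conjugate $y^\ell$ again lies in $L$, so $y^\ell\in H\cap L\subseteq T$. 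Hence $y^\ell\in T$ with $\ell\in L$, meaning $y$ meets an $L$-conjugate of $T$, i.e.\ $y\notin\Delta_T(L)$. Taking contrapositives completes the argument.

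The step I expect to be the main obstacle is the reduction in the first paragraph. A direct element-chase is deceptive here, because two elements of $L$ that are conjugate in $G$ need not be conjugate in $L$; writing an arbitrary $g\in G$ as $hl$ instead conjugates $y^h$ (not $y$) into $T$, which is useless. The whole point is that $G=HL$ lets one choose the conjugating element inside $L$ itself, after which normality of $L$ and the inclusion $H\cap L\subseteq T$ do the rest. I would double-check precisely this passage, verifying that $y^g\in H\iff y^\ell\in H$ is valid because $h\in H$ normalises $H$.

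As a conceptual cross-check that the statement is correct, it may help to phrase it through the coset action: since $G=HL$, the group $L$ acts transitively on $G/H$ with point stabiliser $H\cap L$, so $G/H\cong L/(H\cap L)$ as $L$-sets, and the containment $H\cap L\le T$ furnishes an $L$-equivariant surjection $L/(H\cap L)\twoheadrightarrow L/T$. An element $y\in L$ that fixes a point of $G/H$ thus fixes a point of $L/(H\cap L)$, and its image fixes a point of $L/T$; this is exactly the element-theoretic claim above, recast in terms of fixed points. I would present the element version in the proof, but keep the equivariant-map picture in mind as the reason the reduction to $L$ works.
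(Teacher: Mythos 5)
Your proof is correct and follows essentially the same route as the paper's: write $g=\ell h$ with $\ell\in L$, $h\in H$ using $G=LH$, deduce $y^\ell\in H$ since $h$ normalises $H$, and then use normality of $L$ together with $H\cap L\le T$ to get $y^\ell\in T$. The only cosmetic difference is that the paper argues by contradiction rather than by contrapositive.
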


\begin{proof}
Let $x\in \Delta_T(L)$ and assume that $x\not\in\Delta_H(G)$. Then  $x^{g}\in H$ for some $g\in G.$ Since $x\in L\unlhd G,$ we  have $x^{g}\in L,$ so $x^{g}\in H\cap L\le T.$  As $g\in G=HL=LH$, we can write $g=lh$ with $h\in H$ and $l\in L.$ Then $x^g=x^{lh}=(x^l)^h\in H$ which implies that $x^l\in H$ and since both $x$ and $l$ are in $L,$ we obtain that $x^l\in H\cap L\le T,$ which is a contradiction.
\end{proof}

We collect in the next lemma some properties of finite groups satisfying $\mathcal{D}_p$.

\begin{lemma}\label{D-property} Let $p$ be a prime, $G$ be a finite group and $H$ be a subgroup of $G.$ Let $L\unlhd G.$
\begin{enumerate}[$(1)$]
\item  If $G=HL$ and $(G,H)$ satisfies $\mathcal{D}_p$ then so does $(L,H\cap L).$

\item If $L$ is a $p$-group or $p'$-group, $(G,H)$ satisfies $\mathcal{D}_p$ and $G\neq HL,$ then $(G/L,HL/L)$ also satisfies $\mathcal{D}_p.$

\item If $H\le K< G$ and $(G,H)$ satisfies $\mathcal{D}_p$, then $(G,K)$ satisfies $\mathcal{D}_p.$

\item If $L\unlhd G$ such that $L\leq H$ and $(G/L,H/L)$ satisfies $\mathcal{D}_p$ then $(G,H)$ satisfies $\mathcal{D}_p.$
\end{enumerate}
\end{lemma}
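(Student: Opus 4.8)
The plan is to treat the four statements separately, reducing three of them to the transfer principle already established in Lemma \ref{lem:normal} or to elementary facts about $p$-regular elements and their images in quotients, and isolating the one genuinely substantive point in part $(2)$. Throughout I use that $(G,H)$ satisfies $\mathcal{D}_p$ precisely when $x^G\cap H\neq\emptyset$ for every $p$-regular $x\in G$, equivalently when every $H$-derangement of $G$ has order divisible by $p$.

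Part $(3)$ is immediate: if $H\le K$ then $x^G\cap H\subseteq x^G\cap K$, so nonemptiness of the left side for every $p$-regular $x$ forces nonemptiness of the right side. For part $(4)$ I would first note that the projection $G\to G/L$ carries $p$-regular elements to $p$-regular elements, since the order of $xL$ divides that of $x$. Thus for $p$-regular $x\in G$ the image $xL$ is $p$-regular in $G/L$, so by hypothesis $(xL)^{gL}=x^gL\in H/L$ for some $g$; because $L\le H$ this forces $x^g\in H$, giving $x^G\cap H\neq\emptyset$. Part $(1)$ is where Lemma \ref{lem:normal} does the work: taking $T=H\cap L$, if $T$ is proper in $L$ the lemma yields $\Delta_{H\cap L}(L)\subseteq\Delta_H(G)$, so every $(H\cap L)$-derangement of $L$ is an $H$-derangement of $G$ and hence has order divisible by $p$; if instead $L\le H$ then $x^L\cap(H\cap L)=x^L\neq\emptyset$ trivially. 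Either way $(L,H\cap L)$ satisfies $\mathcal{D}_p$. (Equivalently one argues directly, writing $g=lh$ with $l\in L$, $h\in H$ via $G=LH$, and replacing the $G$-conjugate $x^g\in H\cap L$ by the $L$-conjugate $(x^g)^{h^{-1}}=x^l$, which again lies in $H\cap L$.)

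The main obstacle, and the only part requiring a real idea, is part $(2)$: transferring $\mathcal{D}_p$ to $G/L$ requires lifting a $p$-regular element of $G/L$ back to a $p$-regular element of $G$. I would split on the nature of $L$. If $L$ is a $p'$-group, then for $p$-regular $xL$ the $p$-part $x_p$ of any preimage $x$ maps to the identity, so $x_p\in L$; being a $p$-element of a $p'$-group, $x_p=1$, and so every preimage is already $p$-regular. If $L$ is a $p$-group, I would instead pass to the full preimage $M=\langle x\rangle L$ of the cyclic $p'$-group $\langle xL\rangle$: here $L$ is a normal Sylow $p$-subgroup of $M$ with $p'$-quotient, so Schur--Zassenhaus provides a complement $C$, and a suitable power of a generator of $C$ is a $p$-regular element $y$ with $yL=xL$. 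In both cases, applying $\mathcal{D}_p$ for $(G,H)$ to $y$ gives $y^g\in H$, whence $(xL)^{gL}=y^gL\in HL/L$, as required.

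Finally, the hypothesis $G\neq HL$ plays no role in the derangement-meeting argument itself; it is needed only to ensure that $HL/L$ is a \emph{proper} subgroup of $G/L$, so that the assertion ``$(G/L,HL/L)$ satisfies $\mathcal{D}_p$'' is meaningful under the standing convention that $\mathcal{D}_p$ is defined for proper subgroups.
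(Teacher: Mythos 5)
Your proposal is correct and follows essentially the same route as the paper: part (1) via Lemma \ref{lem:normal} (with the trivial case $L\le H$ noted), part (3) by the containment of derangement sets, and part (4) by the observation that conjugacy into $H$ is detected modulo $L$ when $L\le H$. The only variation is in part (2), where the paper pushes a derangement of $G/L$ down to $G$ and, in the $p$-group case, extracts a coset representative of $p'$-order by the Bezout power computation $y=x^{up^a}$, whereas you lift a $p$-regular element of $G/L$ to a $p$-regular one of $G$ via Schur--Zassenhaus; this is logically equivalent, and your Schur--Zassenhaus step could even be replaced by the simpler observation you already use in the $p'$-case, namely that the $p$-part of any preimage lies in $L$, so its $p'$-part is already a $p$-regular preimage.
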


\begin{proof}
(1) This follows immediately since $\Delta_{H\cap L}(L)\subseteq \Delta_H(G)$ by Lemma \ref{lem:normal}

\medskip
(2) Let $\overline{G}=G/L.$ Since $G\neq HL,$ we see that $\overline{H}$ is a proper subgroup of $\overline{G}.$ Let $\overline{x}\in\Delta_{\overline{H}}(\overline{G}).$ Then $\overline{x}^{\overline{G}}\cap \overline{H}=\emptyset$ which implies that $x^G\cap HL=\emptyset$ and thus $x^G\cap H=\emptyset$ or $x\in\Delta_H(G)$ so $p$ divides $|x|,$ the order of $x.$

If $L$ is a $p'$-group, then the order of $\overline{x}$ must be divisible by $p$ and we are done. So, assume that $L$ is a $p$-group and that the order of $\overline{x}$, say $n,$ is indivisible by $p$. As $L$ is a $p$-group and $p\nmid n,$ we see that $|x|=p^an$ for some integer $a.$ There exist integers $u,v$ such that $1=up^a+vn.$ Hence $x=(x^{p^a})^u(x^n)^v$ where $x^n\in L.$ Thus $\overline{x}=\overline{y}$ with $y=x^{up^a}$ and $|y|=n.$ We now have that $\overline{y}^{\overline{G}}\cap \overline{H}=\emptyset$ and thus $y^G\cap HL=\emptyset$ so $y\in\Delta_H(G)$ and hence $p$ divides $|y|,$ which is a contradiction.

\medskip

(3) is obvious since $\Delta_K(G)\subseteq \Delta_H(G).$

\medskip
(4) Let $x\in\Delta_H(G).$ Then $x^G\cap H=\emptyset$ and thus $\overline{x}^{\overline{G}}\cap \overline{H}=\emptyset$ since $\overline{H}=H/L.$ So $p$ divides $|x|$ as  $(G/L,H/L)$ satisfies $\mathcal{D}_p$. Clearly $p$ must divide $|x|$ and we are done.
\end{proof}

We now apply Lemma \ref{D-property} for $H=\N_G(Q),$ where $Q$ is a Sylow $q$-subgroup of $G.$ The next lemma asserts that the condition `$\N_G(Q)$ meets every $p$-regular class of a finite group $G$'  is inherited to normal subgroups.

\begin{lemma}\label{lem6}
Let $p$ and $q$ be distinct primes. Let $Q$ be a Sylow $q$-subgroup of $G$ and let $L\unlhd G$. Suppose that $x^G\cap \N_G(Q)\neq\emptyset$ for all $p$-regular elements $x$ of $G.$ Then $x^L\cap \N_L(Q\cap L)\neq\emptyset$ for all $p$-regular elements $x$ of $L.$ In particular, if $Q\leq L,$ then $x^L\cap \N_L(Q)\neq\emptyset$ for all $p$-regular elements $x$ of $L.$
\end{lemma}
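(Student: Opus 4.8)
The plan is to reduce the statement to the inheritance properties already packaged in Lemma \ref{D-property}, using the Frattini argument to supply the factorization needed to invoke part (1). Write $K = Q\cap L$. Since $L\unlhd G$ and $Q$ is a Sylow $q$-subgroup of $G$, standard Sylow theory shows that $K$ is a Sylow $q$-subgroup of $L$. In the language of the preceding lemmas the hypothesis is exactly that $(G,\N_G(Q))$ satisfies $\mathcal{D}_p$, and the goal is to prove that $(L,\N_L(K))$ satisfies $\mathcal{D}_p$, since $\N_L(K)=\N_L(Q\cap L)$.

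First I would dispose of the degenerate case. If $K\unlhd L$, then $\N_L(K)=L$, so $x^L\cap\N_L(K)=x^L\neq\emptyset$ for every $x\in L$ and there is nothing to prove; this also absorbs the case $Q\unlhd G$, where the hypothesis is vacuous. So I may assume $K$ is not normal in $L$. Because $K\le L\le G$, non-normality of $K$ in $L$ forces non-normality of $K$ in $G$, so $\N_G(K)$ is a proper subgroup of $G$.

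The key observation is the inclusion $\N_G(Q)\le\N_G(K)$: any $g$ normalizing $Q$ also normalizes $L$ (as $L\unlhd G$) and hence normalizes $K=Q\cap L$. Since $\N_G(Q)\le\N_G(K)<G$ and $(G,\N_G(Q))$ satisfies $\mathcal{D}_p$, Lemma \ref{D-property}(3) gives that $(G,\N_G(K))$ satisfies $\mathcal{D}_p$. Now the Frattini argument, applied to the Sylow subgroup $K$ of the normal subgroup $L$, yields $G=L\,\N_G(K)$. Setting $H=\N_G(K)$ and applying Lemma \ref{D-property}(1), I conclude that $(L,H\cap L)$ satisfies $\mathcal{D}_p$. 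Finally $H\cap L=\N_G(K)\cap L=\N_L(K)=\N_L(Q\cap L)$, which is precisely the assertion $x^L\cap\N_L(Q\cap L)\neq\emptyset$ for all $p$-regular $x\in L$. The ``in particular'' clause is the specialization $Q\le L$, where $K=Q$.

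I do not expect a genuine obstacle: the argument is a bookkeeping combination of the Frattini argument with the already-established parts (1) and (3) of Lemma \ref{D-property}. The one point demanding care is the choice of intermediate subgroup. One must route through $\N_G(K)$ rather than $\N_G(Q)$, both because the factorization $G=L\,\N_G(Q)$ need not hold when $Q\not\le L$, and because it is $\N_G(K)\cap L=\N_L(Q\cap L)$ — not $\N_G(Q)\cap L=\N_L(Q)$ — that matches the target subgroup. Recognizing that the chain $\N_G(Q)\le\N_G(K)$ lets one first lift the hypothesis up to $\N_G(K)$ via part (3) and then descend to $L$ via part (1) is the crux of the proof.
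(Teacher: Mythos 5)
Your proof is correct and follows essentially the same route as the paper's: handle the trivial case $Q\cap L\unlhd L$ separately, observe that $\N_G(Q)\le \N_G(Q\cap L)$ (since $\N_G(Q)$ normalizes both $Q$ and $L$), apply Lemma \ref{D-property}(3) to pass to $\N_G(Q\cap L)$, use the Frattini argument to get $G=L\,\N_G(Q\cap L)$, and finish with Lemma \ref{D-property}(1). Your remark about why one must route through $\N_G(Q\cap L)$ rather than $\N_G(Q)$ correctly identifies the one subtle point.
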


\begin{proof} Let $H=\N_G(Q)$ and $U=Q\cap L.$ Then $U\unlhd Q\unlhd H\le \N_G(U)$ and $U\in\Syl_q(L).$ Since $L\unlhd G,$ we have $G=\N_G(U)L$ by Frattini's argument.

If $U\unlhd L,$ then the conclusion is trivially true. So, we may assume that $\N_L(U)$ is a proper subgroup of $L$ which implies that both $H$ and $\N_G(U)$ are proper subgroups of $G.$ It suffices to show that the pair $(L,\N_L(U))$ satisfies $\mathcal{D}_p.$

Clearly, $(G,H)$ satisfies $\mathcal{D}_p$ by the hypothesis, so $(G,\N_G(U))$ satisfies $\mathcal{D}_p$ by Lemma \ref{D-property}$(3).$ Now part (1) of Lemma \ref{D-property} implies that $(L,L\cap \N_G(U))$ satisfies $\cD_p$ or $(L, \N_L(U))$ satisfies $\cD_p$ as wanted.
\end{proof}

We next prove Theorem A under the additional hypothesis that $G = Q\OO_{q'} (G)$ where $Q$ is a Sylow $q$-subgroup of $G$.

\begin{lemma}\label{lem3}
 Let $p$ and $q$ be distinct primes and let $Q\in\Syl_q(G)$. Suppose that $G=Q\OO_{q'}(G)$ and that $q\nmid \varphi(1)$ for all $\varphi\in\IBr(G).$ Let $K=\OO_{q'}(G)$ and $H=\N_G(Q).$ Then
 \begin{enumerate}
 \item $Q$ is abelian and $x^K\cap\Centralizer_K(Q)\neq\emptyset$ for all $p$-regular elements $x\in K;$
 \item $x^G\cap \N_G(Q)$ is non-empty for all $p$-regular elements $x\in G$.
 \end{enumerate}
\end{lemma}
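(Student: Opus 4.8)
The plan is to establish (1) first and then derive (2) from it. To begin, note that because $K=\OO_{q'}(G)$ is a normal $q'$-subgroup while $Q$ is a Sylow $q$-subgroup, we have $Q\cap K=1$ and $G=QK$, so $G/K\cong Q$ is a $q$-group, and in particular a $p'$-group since $p\ne q$. Hence $\IBr(G/K)=\Irr(Q)$, and every such character inflates to an irreducible $p$-Brauer character of $G$ of the same degree. The hypothesis forces these degrees to be prime to $q$; but they are powers of $q$, so every irreducible character of $Q$ is linear and $Q$ is abelian. This settles the first assertion of (1), and it is the easy part.

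For the remainder of (1) I would show that $Q$ fixes \emph{every} $p$-regular class of $K$ and then invoke coprimality. First, given $\theta\in\IBr(K)$ with inertia group $T=I_G(\theta)$, the Clifford correspondence for Brauer characters gives $\varphi(1)=|G:T|\,\psi(1)$ for any $\varphi\in\IBr(G)$ lying over $\theta$ and a suitable $\psi\in\IBr(T\mid\theta)$. Since $K\le T\le G$ and $G/K$ is a $q$-group, $|G:T|$ is a power of $q$, so $q\nmid\varphi(1)$ forces $T=G$; thus $\theta$ is $G$-invariant, hence $Q$-invariant. As every member of $\IBr(K)$ lies under some $\varphi\in\IBr(G)$, the whole set $\IBr(K)$ is $Q$-invariant. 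Next I would apply the Brauer-character analogue of Brauer's permutation lemma to the action of $Q$ on $K$: for each $u\in Q$ the number of $u$-fixed elements of $\IBr(K)$ equals the number of $u$-fixed $p$-regular classes; since $u$ fixes all of $\IBr(K)$, it fixes all $p$-regular classes, so $Q$ fixes every $p$-regular class of $K$. Finally, if $C$ is such a class, then $Q$ acts on the set $C$ by conjugation and $|C|$ divides $|K|$, so $q\nmid|C|$; a $q$-group acting on a set of order prime to $q$ has a fixed point, which lies in $C\cap\Centralizer_K(Q)$. This chain — the Clifford degree constraint, the transfer from invariance of \emph{characters} to invariance of \emph{classes} via the permutation lemma, and the coprime fixed-point count — is the technical heart of the proof and the step I expect to be the main obstacle.

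For (2), let $x\in G$ be $p$-regular and write $x=yz$ with $y=x_q$ its $q$-part and $z=x_{q'}$ its $q'$-part; both are powers of $x$, hence $p$-regular, and they commute. Because $G/K$ is a $q$-group, the image of the $q'$-element $z$ in $G/K$ is trivial, so $z\in K$. By part (1) there is $k\in K$ with $z^{k}\in\Centralizer_K(Q)$; setting $z'=z^{k}$ and $y'=y^{k}$ we have $x^{k}=y'z'$ with $y'$ a $q$-element commuting with $z'$, and $Q\le\Centralizer_G(z')=:C$ since $z'$ centralizes $Q$. As $y'$ commutes with $z'$ we get $y'\in C$, and $Q\in\Syl_q(G)$ with $Q\le C$ gives $Q\in\Syl_q(C)$. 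Hence the $q$-element $y'$ is $C$-conjugate into $Q$: there is $c\in C$ with $(y')^{c}\in Q$. Since $c$ centralizes $z'$, conjugating yields $x^{kc}=(y')^{c}z'$ with $(y')^{c}\in Q\le\N_G(Q)$ and $z'\in\Centralizer_K(Q)\le\N_G(Q)$, so $x^{kc}\in\N_G(Q)$ and therefore $x^{G}$ meets $\N_G(Q)$. This part is a fairly clean reduction once (1) is in hand, the only ingredients being the $q$/$q'$-decomposition, the observation that $q'$-parts fall into $K$, and a Sylow argument inside the centralizer of the now $Q$-centralized $q'$-part.
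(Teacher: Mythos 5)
Your proof is correct, and its overall skeleton coincides with the paper's: Clifford correspondence forces every $\theta\in\IBr(K)$ to be $Q$-invariant, this is transferred to $Q$-invariance of all $p$-regular classes of $K$, a coprimality argument produces a $Q$-fixed element in each class, and part (2) follows by splitting a $p$-regular element into its commuting $q$- and $q'$-parts. The differences are in the tools used at three individual steps, and in each case your choice is at least as clean. For the abelianness of $Q$ you argue directly that $\Irr(Q)=\IBr(G/K)$ inflates into $\IBr(G)$, so the $q$-power degrees must be $1$; the paper instead quotes the Manz--Wolf result (its Lemma 2.2). For the passage from invariance of characters to invariance of classes you invoke the Brauer permutation lemma for Brauer characters (fixed-point counts agree because the Brauer character table is nonsingular), whereas the paper uses the equivalent fact that the projective indecomposable characters form a basis of the class functions vanishing off $p$-regular elements; these are two faces of the same nondegeneracy statement. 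Most notably, for the existence of a $Q$-fixed element in a $Q$-stable $p$-regular class you use plain orbit counting --- the class has cardinality dividing $|K|$, hence prime to $q$, so the $q$-group $Q$ must have a fixed point --- where the paper cites Glauberman's theorem; your elementary argument suffices here precisely because the acting group is a $q$-group and the set has $q'$-cardinality, so the citation of Glauberman is not actually needed. In part (2) you conjugate the $q'$-part into $\Centralizer_K(Q)$ first and then Sylow-conjugate the $q$-part inside $\Centralizer_G(z')$, while the paper does these in the opposite order; both are valid.
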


\begin{proof}
Assume first that $q\nmid \varphi(1)$ for all $\varphi\in\IBr(G).$  Since $G=QK,$ with $K\unlhd G$ and $Q\cap K=1,$ we deduce that $H=Q\Centralizer_K(Q).$

\medskip
$(0)$ $Q$ is abelian. This follows from Lemma \ref{lem2}$(ii).$

\medskip

$(1)$ Every $\theta\in\IBr(K)$ is $Q$-invariant. Let $\theta\in\IBr(K)$ and let $T=I_G(\theta).$ By Clifford correspondence \cite[Theorem~8.9]{Navarro}, if $\psi\in\IBr(T\vert\theta),$ then $\psi^G\in\IBr(G)$ and so $\psi^G(1)=|G:T|\psi(1).$ As $K\unlhd T\le G$ and $q\nmid \psi^G(1),$ we must have that $T=G$ or equivalently $\theta$ is $G$-invariant and hence it is $Q$-invariant.

\medskip
$(2)$ $Q$ stabilizes all $p$-regular conjugacy classes of $K.$ From $(1),$ $Q$ stabilizes all irreducible $p$-Brauer characters of $K$ and hence it stabilizes all the projective indecomposable characters $\Phi_\mu$ associated with $\mu\in\IBr(K).$ From \cite[Theorem~2.13]{Navarro}, the set \[\{\Phi_\mu \mid \mu \in \IBr(K)\}\] is a basis of the space of complex functions of $K$ vanishing off the set $K^\circ$ of all $p$-regular elements of $K.$ Therefore $Q$ stabilizes the characteristic functions of the $p$-regular conjugacy classes of $K$ and thus $Q$ stabilizes all the $p$-regular conjugacy classes of $K.$

\medskip
$(3)$ If $x\in K$ is $p$-regular, then $x^K\cap \Centralizer_K(Q)\neq\emptyset.$ Let $\cC$ be a $p$-regular class of $K.$  By $(2),$ $Q$ acts  on $\cC$, $K$ acts transitively on $\cC$ and $Q$ acts coprimely on $K.$ By  Corollary ~1 of Theorem~4 in \cite{Glauberman}, we obtain that $\cC\cap \Centralizer_K(Q)\neq\emptyset.$ This proves $(i).$

\medskip
$(4)$ $y^G\cap H\neq\emptyset$ for every $p$-regular element $y\in G.$ Let $y$ be a $p$-regular element of $G$. If $y\in K,$ then $y^K\cap \Centralizer_K(Q)\neq\emptyset$ so $y^G\cap H\neq\emptyset$ and we are done. So, we can assume that $y\not\in K.$

Suppose that the order of $y$ is $q^a\cdot m$, where $a,m\ge 1$ and $q\nmid m.$ Since $\gcd(q^a,m)=1$, there exist integers $u,v$ such that $1=uq^a+vm.$ Let $y_1=y^{vm}$ and $y_2=y^{uq^a}.$ Then $y=y_1y_2=y_2y_1,$ where  $y_1$ is a $q$-element and $y_2$ is $q'$-element. By Sylow theorem, $y_1^t\in Q$ for some $t\in G.$ Since $y_2^t$ is a $q'$-element and $K\unlhd G$ is a Hall $q'$-subgroup of $G,$ we deduce that $y_2^t\in K.$ Thus $y^t=y_1^ty_2^t=y_2^ty_1^t,$ where $y_1^t\in Q$ and $y_2^t\in K.$ Replacing $y$ by $y^t,$ we can assume $y=y_1y_2=y_2y_1$ with $y_1\in Q$, $y_2\in K$. Since $y$ is $p$-regular, we see that $y_2\in K$ is also $p$-regular and thus $y_2^k\in\Centralizer_K(Q)$ for some $k\in K$ by (3) above. Hence $Q^{k^{-1}}\leq \Centralizer_G(y_2).$ By Sylow theorem, there exists $l\in \Centralizer_G(y_2)$ such that $Q^{k^{-1}}=Q^l$ so that $Q^{lk}=Q$ or equivalently $lk\in H.$

Since $y=y_1y_2$ and $y_1\in Q\leq H,$ we deduce that $y_1^{lk}\in H.$ Moreover, as $l\in\Centralizer_G(y_2),$ we have $y_2^{lk}=y_2^k\in \Centralizer_K(Q)\leq H.$ Therefore,  $$y^{lk}=y_1^{lk}y_2^{lk}\in H.$$
So, we have shown that $y^G\cap H\neq\emptyset$ for all $p$-regular elements $y\in G.$ This completes the proof of $(ii).$
\end{proof}

We are now ready to prove Theorem A which we restate here.

\begin{theorem}
Let $p$ and $q$ be distinct primes and let $G$ be a finite $p$-solvable group. Suppose that $q\nmid \varphi(1)$ for all $\varphi\in\IBr(G).$ Then $x^G\cap \N_G(Q)\neq\emptyset$ for all $p$-regular elements $x\in G,$ where $Q$ is a Sylow $q$-subgroup of $G.$
\end{theorem}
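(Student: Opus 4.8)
The plan is to induct on $|G|$, taking Lemma~\ref{lem3} as the base case $G=Q\OO_{q'}(G)$ and using the $\mathcal{D}_p$-calculus of Lemma~\ref{D-property} to remove normal $p$- and $q$-subgroups. Write $H=\N_G(Q)$. If $\OO_q(G)\neq 1$, then $N:=\OO_q(G)\le Q\le H$; since $G/N$ is $p$-solvable and still has all $p$-Brauer characters of $q'$-degree, and $\N_{G/N}(QN/N)=H/N$, induction applied to $G/N$ together with Lemma~\ref{D-property}(4) yields $\mathcal{D}_p$ for $(G,H)$. If $\OO_p(G)\neq 1$, the analogous use of induction on $G/\OO_p(G)$ works: because $p$-regular classes lift uniquely up to conjugacy through a normal $p$-subgroup, a $G$-conjugate of any $p$-regular $x$ found in $H\OO_p(G)$ can be pushed into $H$ itself. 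Hence I may assume $\OO_p(G)=\OO_q(G)=1$.

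Under this assumption $\OO_{p,q}(G)=1$, so Lemma~\ref{lem2}(3) says $G$ itself has $q$-length at most $1$: writing $K=\OO_{q'}(G)$, the subgroup $L:=QK=\OO_{q',q}(G)$ is normal in $G$ with $G/L$ a $q'$-group. Since $L=Q\OO_{q'}(L)$ and $L$ inherits the hypothesis that every $p$-Brauer character has $q'$-degree, Lemma~\ref{lem3} applies to $L$: it gives that $Q$ is abelian, that every $p$-regular class of $L$ meets $\N_L(Q)=Q\Centralizer_K(Q)$, and that $\Centralizer_K(Q)$ meets every $p$-regular class of $K$. By Frattini, $G=LH$.

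It remains to lift the conclusion across the $q'$-quotient $G/L$, and this is the step I expect to be the main obstacle. Following the pattern of Lemma~\ref{lem3}(4), write a $p$-regular $x$ as $x=x_1x_2$ with commuting $q$-part $x_1$ and $q'$-part $x_2$, and conjugate so that $x_1\in Q$; the case $x\in L$ is then handled directly by the statement already proved for $L$. In Lemma~\ref{lem3} the $q'$-part automatically landed in $K$ because there $K$ was a Hall $q'$-subgroup, but now $G/L$ is a nontrivial $q'$-group, so $x_2$ may lie outside $L$, and one must conjugate $x_2$ into $H$ while keeping $x_1$ inside $Q$. No purely group-theoretic argument can achieve this: a $q'$-element need not normalize \emph{any} Sylow $q$-subgroup (for instance a Sylow $3$-subgroup of $S_4$ normalizes no Sylow $2$-subgroup), so the Brauer-degree hypothesis on $G$ itself — not merely its consequence for $L$ — has to re-enter at the top. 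The natural attempt is to re-run the Clifford argument of Lemma~\ref{lem3}(1) over $K$: for $\theta\in\IBr(K)$ with inertia group $T$, the relation $\psi^G(1)=|G:T|\psi(1)$ with $q\nmid\psi^G(1)$ now forces only $q\nmid|G:T|$, i.e. for each $\theta$ \emph{some} Sylow $q$-subgroup stabilizes it, and then to combine this with Glauberman's coprime-action lemma (Corollary~1 of Theorem~4 in \cite{Glauberman}). The genuinely delicate points are to upgrade this to simultaneous $Q$-invariance and, above all, to control the component of $x_2$ living outside $L$; reconciling the Clifford stabilizer with a Glauberman fixed point across the non-$q$-group quotient $G/L$ is exactly where the passage from $q=p$ (handled in the introduction via \cite{FKS}) to $q\neq p$ becomes substantive.
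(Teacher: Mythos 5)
Your reductions match the paper's: the induction on $|G|$, the elimination of $\OO_p(G)$ and $\OO_q(G)$ via Lemma \ref{D-property} and a Hall-subgroup argument, the identification (using Lemma \ref{lem2} and $\OO_{p,q}=1$) of a normal subgroup of the form $Q\OO_{q'}(\cdot)$, and the application of Lemma \ref{lem3} to it are all exactly the paper's steps (1)--(4). But the proof is not finished: you stop at what you call ``the main obstacle,'' namely passing from the conclusion for $L$ to the conclusion for $G$ across the $q'$-quotient $G/L$, and the direction you sketch for closing it (decomposing a $p$-regular element of $G$ into commuting $q$- and $q'$-parts, or re-running the Clifford argument over $K$ at the level of $G$) is not carried out and is not what is needed. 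That is a genuine gap, and your diagnosis of \emph{why} the naive element-decomposition fails (the $q'$-part need not lie in $L$, and $q'$-elements need not normalize any Sylow $q$-subgroup) is correct but does not by itself produce a proof.

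The missing idea is that one should not try to lift across all of $G/L$ at once. Since $K=\OO_{q'}(L)$ is a nontrivial solvable normal subgroup of $G$ (nontrivial because $\OO_q(L)=1$ while $Q\neq 1$), it contains a \emph{minimal normal subgroup} $N$ of $G$, which is an elementary abelian $r$-group for some $r\notin\{p,q\}$. The inductive step already established in your first paragraph --- applied now to $G/N$ rather than to $G/\OO_p(G)$ or $G/\OO_q(G)$ --- gives $y^G\cap HN\neq\emptyset$ for every $p$-regular $y$, so the whole problem localizes to conjugating a $p$-regular element $y=hn\in HN$ (with $h\in H$, $n\in N\le K$) into $H$. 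Here the Glauberman fixed-point conclusion of Lemma \ref{lem3}(i) applies to $n$: some $n^k$ with $k\in K$ lies in $\Centralizer_K(Q)$, so $Q^{k^{-1}}\le\Centralizer_L(n)$, and a Sylow adjustment inside $\Centralizer_L(n)$ produces $l\in\Centralizer_L(n)$ with $lk\in\N_G(Q)=H$, whence $y^{lk}=h^{lk}n^{k}\in H$. No further use of the Brauer-degree hypothesis ``at the top'' is required; the induction hypothesis on $G/N$ plus the centralizer condition on $K$ suffice. Without this minimal-normal-subgroup step your argument does not close.
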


\begin{proof} Let $Q\in\Syl_q(G)$ and $H=\N_G(Q).$ Suppose that $q\nmid\varphi(1)$ for all $\varphi\in\IBr(G).$
We proceed by induction on $|G|.$

\medskip
(1) If $N\unlhd G$ is nontrivial, then $y^G\cap HN\neq\emptyset$ for all $p$-regular elements $y\in G.$  Since $G/N$ satisfies the hypothesis of the theorem with $|G/N|<|G|,$ by induction we deduce that $\N_{G/N}(QN/N)=\N_G(Q)N/N=HN/N$ meets all the $p$-regular classes of $G/N.$ It follows that  $y^G\cap HN\neq\emptyset$ for every $p$-regular element $y\in G.$

\medskip
(2) $\OO_p(G)=1=\OO_q(G)$ and $H_G=1.$  If $H_G\unlhd G$ is nontrivial, then the conclusion of the theorem holds by applying (1). So, we can assume $H_G=1$ and so $\OO_q(G)=1.$

Suppose that $\OO_p(G)$ is nontrivial. By (1) again, we see that $y^G\cap H\OO_p(G)$ is nonempty for every $p$-regular element $y\in G.$ Replacing $y$ by its conjugate if necessary, we can assume $y\in H\OO_p(G).$ Since $y$ is an element of $p'$-order and $H$ contains a Hall $p'$-subgroup $T$ of $H\OO_p(G),$ some $H\OO_p(G)$-conjugate of $y$ lies in $T\subseteq H$ and thus $y^G\cap H$ is nonempty.

\medskip

(3) Let $L=\OO^{q'}(G).$ Then $L=QK$ where $K=\OO_{q'}(L)$ is solvable. Since $L\unlhd G$ and $\OO_p(G)=1=\OO_q(G)$ by (2), we deduce that $\OO_p(L)=\OO_q(L)=1$ so that $\OO_{p,q}(L)=1.$ Therefore, by Lemma \ref{lem2} $L$ is solvable and has $q$-length at most $1.$ Since $L=\OO^{q'}(L),$ we must have that $L=Q\OO_{q'}(L)$ as wanted.

\medskip
(4) $x^K\cap \Centralizer_K(Q)\neq\emptyset$ for every $p$-regular element $x\in K.$ This follows from (3) and Lemma \ref{lem3}(i).

\medskip

(5) $y^G\cap H\neq\emptyset$ for every $p$-regular element $y\in G.$ Since $L=QK$ is solvable  by (3) and $\OO_q(L)=1$ by (2), we see that $K$ is nontrivial and solvable. Notice that $K$ is a solvable normal subgroup of $G.$ Hence $K$ contains a minimal normal subgroup of $G,$ say $N.$ Then $N$ is an elementary abelian $r$-group for some prime $r$ different from both $p$ and $q.$

From (1), we have that $y^G\cap HN\neq\emptyset$ for all $p$-regular elements $y\in G.$ Hence it suffices to show that $y^G\cap H\neq\emptyset$ for all $p$-regular elements $y\in HN.$ Now fix a $p$-regular element $y\in HN.$ Then $y=hn$ for some $h\in H$ and $n\in N.$ If $n=1,$ then $y=h\in H$ and we are done. So we assume that $n$ is nontrivial. By (4), $n^k\in\Centralizer_K(Q)$ for some $k\in K.$ Hence $Q^{k^{-1}}\le \Centralizer_{QK}(n)$, and thus by Sylow theorem, $Q^{k^{-1}}=Q^l$ for some $l\in \Centralizer_{QK}(n)$ so $Q^{lk}=Q$ and hence $lk\in H.$ Since $n^{lk}=n^k$ and $h^{lk}\in H,$ we obtain that  \[y^{lk}=(hn)^{lk}=h^{lk}n^{lk}=h^{lk}n^k\in H.\] Therefore, we have shown that $y^G\cap H$ is not empty.
\end{proof}

\section{Proof of Theorem B}

Let $N\unlhd G$ and let $\theta\in\IBr(N).$ Denote by $\IBr(G \mid \theta)$ the set of all irreducible $p$-Brauer characters of $G$ lying over $\theta.$ We now consider the converse of Lemma \ref{lem3}.

\begin{lemma}\label{lem4} Let $p$ and $q$ be distinct primes and suppose that $G=QK$ where $Q$ is an abelian Sylow $q$-subgroup of $G$ and $K=\OO_{q'}(G).$  If $Q$ is abelian and $x^K\cap\Centralizer_{K}(Q)\neq\emptyset$ for all $p$-regular elements $x\in K,$ then $q\nmid\beta(1)$ for all $\beta\in\IBr(G).$
\end{lemma}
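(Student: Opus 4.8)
The plan is to fix an arbitrary $\varphi\in\IBr(G)$, choose an irreducible constituent $\theta\in\IBr(K)$ of the restriction $\varphi_K$, and prove two things: that $\theta$ is $G$-invariant, and that every member of $\IBr(G\mid\theta)$ has degree exactly $\theta(1)$. Since $\theta(1)$ divides $|K|$, a $q'$-number because $K=\OO_{q'}(G)$, this forces $q\nmid\varphi(1)$, and as $\varphi$ was arbitrary we are done. Note first that $Q\cap K=1$ (it is both a $q$- and a $q'$-group), so $G=QK$ realizes $G/K\cong Q$ as an abelian quotient with $\gcd(|K|,|G:K|)=1$.

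The first task is to show the hypothesis forces every $\theta\in\IBr(K)$ to be $Q$-invariant, equivalently $G$-invariant (since $K\le I_G(\theta)$ always and $G=QK$). The key observation is that each $p$-regular class of $K$ is already $Q$-invariant: if $\cC=x^K$ is $p$-regular, the hypothesis furnishes $x_0\in\cC\cap\Centralizer_K(Q)$, and then for every $\sigma\in Q$ one has $\cC^{\sigma}=(x_0^{\sigma})^{K^{\sigma}}=x_0^{K}=\cC$, using $x_0^{\sigma}=x_0$ and $K\unlhd G$. Thus $Q$ fixes every $p$-regular class of $K$. Since $Q$ acts compatibly on $\IBr(K)$ and on the $p$-regular classes, and the Brauer character table intertwines these two permutation actions via an invertible matrix, Brauer's permutation lemma shows that a $\sigma\in Q$ fixing all $p$-regular classes must fix all of $\IBr(K)$ as well. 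Hence every $\theta\in\IBr(K)$ is $Q$-invariant.

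With $\theta$ now $G$-invariant, the decisive step is extendibility: because $\gcd(|K|,|G:K|)=1$, the coprime extension theorem for Brauer characters provides an extension $\hat\theta\in\IBr(G)$ of $\theta$. Granting this, Gallagher's theorem for Brauer characters identifies $\IBr(G\mid\theta)$ with $\{\hat\theta\mu:\mu\in\IBr(G/K)\}$, where $(\hat\theta\mu)(1)=\theta(1)\mu(1)$. Here the hypothesis that $Q$ is abelian is indispensable: $G/K\cong Q$ is then abelian and, since $q\neq p$, a $p'$-group, so $\IBr(G/K)=\Irr(G/K)$ consists of linear characters, giving $\mu(1)=1$ and $(\hat\theta\mu)(1)=\theta(1)$. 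As $\varphi\in\IBr(G\mid\theta)$, we obtain $\varphi(1)=\theta(1)$, a divisor of $|K|$ and hence a $q'$-number.

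The main obstacle is precisely the extendibility of the $G$-invariant $\theta$; everything else is bookkeeping with Clifford theory. Without an extension, $\IBr(G\mid\theta)$ would instead be governed by a possibly nontrivial cohomology class on $G/K\cong Q$, producing irreducible projective degrees that are nontrivial powers of $q$ and thus $q$-divisible Brauer character degrees — so the coprimality $\gcd(|K|,|G:K|)=1$, which trivializes that obstruction, is exactly what makes the conclusion hold. In writing up I would be careful to invoke the Brauer-character versions of the extension and Gallagher theorems (as developed in Navarro's book) rather than their ordinary-character analogues, and to record that the abelianness of $Q$ is used only in the final degree computation.
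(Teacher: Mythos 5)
Your proposal is correct and follows essentially the same route as the paper: show that $Q$ fixes every $p$-regular class of $K$ (immediate from the hypothesis), deduce that $Q$ fixes every member of $\IBr(K)$, and then conclude via the coprime extension theorem and Gallagher's theorem for Brauer characters together with the linearity of $\IBr(G/K)$. The only cosmetic difference is that you justify the step from class-invariance to Brauer-character-invariance by Brauer's permutation lemma applied to the invertible Brauer character table, whereas the paper passes through the basis of projective indecomposable characters of the space of class functions vanishing off $p$-regular elements; the two justifications are interchangeable here.
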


\begin{proof} As $Q\cong G/K$ is abelian, all $\varphi\in\IBr(G/K)$ are linear. Since $(|K|,|G:K|)=1$, by \cite[Theorem~8.13]{Navarro} it suffices to show that every $\beta\in\IBr(K)$ is $Q$-invariant and thus is $G$-invariant;  hence $\beta$ is extendible to $G$ and thus $q$ divides the degree of no irreducible $p$-Brauer character of $G.$

By Corollary 1 of Theorem 4 in \cite{Glauberman}, $Q$ fixes all the $p$-regular conjugacy classes of $K$ since $Q$ fixes some element in every $p$-regular class of $K.$ Reversing the argument in the proof of Lemma \ref{lem3}, we see that $Q$ fixes all the characteristic functions of the $p$-regular classes of $K$ and thus $Q$ fixes all the indecomposable projective characters $\Phi_\mu$ where $\mu\in\IBr(K).$ Hence $Q$ fixes all the $p$-Brauer characters of $K.$
\end{proof}

We now present Theorem B which we restate here.

\begin{theorem}\label{th:abelian}
Let $p$ and $q$ be distinct primes and suppose that $G$ is $p$-solvable and a Sylow $q$-subgroup $Q$ of $G$ is abelian. Then $q\nmid\varphi(1)$ for all $\varphi\in\IBr(G)$ if and only if the following conditions hold:
\begin{enumerate}[$(1)$]
\item $x^G\cap \N_G(Q)\neq\emptyset$ for all $p$-regular elements $x\in G;$
\item $\OO^{q'}(G)$ is solvable;
\end{enumerate}
\end{theorem}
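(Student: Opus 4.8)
The statement is an equivalence, and the forward direction is essentially free: assuming $q \nmid \varphi(1)$ for all $\varphi \in \IBr(G)$, condition $(1)$ is precisely the conclusion of Theorem A, while condition $(2)$ is Lemma \ref{lem2}$(1)$. So the substance lies in the reverse direction, where we assume $(1)$ and $(2)$ and must produce the Brauer-character conclusion. My plan is to descend to $L := \OO^{q'}(G)$, prove the conclusion there via Lemma \ref{lem4}, and lift back using Lemma \ref{lem1}.

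First I would record the basic structure of $L$. Since $G/L$ is a $q'$-group, the Sylow $q$-subgroup $Q$ lies in $L$ and is an (abelian) Sylow $q$-subgroup of $L$; moreover $L = \OO^{q'}(L)$, and $L$ is solvable by hypothesis $(2)$. The key structural claim is that $L = Q\,\OO_{q'}(L)$. To see this, I would pass to $\overline{L} := L/\OO_{q'}(L)$, which is solvable with $\OO_{q'}(\overline{L}) = 1$. Then the Fitting subgroup $\Fit(\overline{L})$ equals $\OO_q(\overline{L})$ and is self-centralizing; since the image $\overline{Q}$ is an abelian Sylow $q$-subgroup containing $\OO_q(\overline{L})$, it centralizes $\OO_q(\overline{L})$ and hence $\overline{Q} \le \Centralizer_{\overline{L}}(\Fit(\overline{L})) \le \Fit(\overline{L}) \le \overline{Q}$, forcing $\overline{Q} = \OO_q(\overline{L}) \unlhd \overline{L}$. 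Then $\overline{L}/\overline{Q}$ is a $q'$-quotient of $\overline{L} = \OO^{q'}(\overline{L})$, so it is trivial and $\overline{L} = \overline{Q}$ is a $q$-group; this gives $L = Q\,\OO_{q'}(L)$, as wanted.

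Next I would feed condition $(1)$ into this structure. By Lemma \ref{lem6} (its ``in particular'' clause, valid since $Q \le L$), condition $(1)$ forces $(L, \N_L(Q))$ to satisfy $\mathcal{D}_p$. Writing $K := \OO_{q'}(L)$, the splitting $L = QK$ with $Q \cap K = 1$ yields $\N_L(Q) = Q\,\Centralizer_K(Q)$, and hence $L = \N_L(Q)K$ with $\N_L(Q) \cap K = \Centralizer_K(Q)$. Applying Lemma \ref{D-property}$(1)$ then shows $(K, \Centralizer_K(Q))$ satisfies $\mathcal{D}_p$; that is, $x^K \cap \Centralizer_K(Q) \neq \emptyset$ for every $p$-regular $x \in K$ (the degenerate case $\Centralizer_K(Q) = K$ being trivial). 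With $L = QK$, $Q$ abelian, and this centralizer condition in hand, Lemma \ref{lem4} yields $q \nmid \beta(1)$ for all $\beta \in \IBr(L)$. Finally, since $G$ is $p$-solvable, $G/L = G/\OO^{q'}(G)$ is $p$-solvable, so Lemma \ref{lem1} lifts this to $q \nmid \varphi(1)$ for all $\varphi \in \IBr(G)$, completing the reverse direction.

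The main obstacle I anticipate is the structural reduction $L = Q\,\OO_{q'}(L)$: unlike the situation in Theorem A, here I do not yet have the Brauer-degree hypothesis available to invoke Lemma \ref{lem2}$(2)$ for $q$-length control, so the bound on the $q$-length must be extracted purely from solvability together with the abelianness of $Q$. The self-centralizing Fitting argument above is exactly the point where the abelian Sylow hypothesis of Theorem B is used in an essential way; everything downstream is a routine transfer of the derangement condition through Lemmas \ref{lem6}, \ref{D-property}, and \ref{lem4}.
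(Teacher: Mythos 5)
Your proposal is correct and follows essentially the same route as the paper: forward direction from Theorem~A plus Lemma~\ref{lem2}, and for the converse a reduction to $L=\OO^{q'}(G)$, the structural identity $L=Q\,\OO_{q'}(L)$ forced by solvability and the abelian Sylow $q$-subgroup, and then Lemma~\ref{lem4} combined with Lemma~\ref{lem1}. The only cosmetic differences are that the paper cites the Hall--Higman centralizer theorem where you reprove it via the self-centralizing Fitting subgroup of $L/\OO_{q'}(L)$, and that you spell out (via Lemmas~\ref{lem6} and~\ref{D-property}(1)) the derivation of the hypothesis $x^K\cap\Centralizer_K(Q)\neq\emptyset$ of Lemma~\ref{lem4}, which the paper leaves implicit.
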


\begin{proof}
The `only if' direction of the theorem holds by combining Lemma \ref{lem2} and Theorem A. We now focus on the `if' part of the theorem.

Suppose that $Q\in\Syl_q(G)$ is abelian and conditions (1)-(2) hold.  Notice that $Q\leq \OO^{q'}(G)$.  In view of Lemma \ref{lem6}, condition (1) hold for $\OO^{q'}(G).$  Applying Lemma \ref{lem1}, we can assume that $G=\OO^{q'}(G),$ which implies that $G$ is solvable.  As $\OO_p(G)$ is contained in the kernel of all irreducible $p$-Brauer characters of $G,$ we can assume that $\OO_p(G)=1.$

As $G$ is solvable, we have from the Hall-Higman theorem that $$\Centralizer_{G/\OO_{q'}(G)}(\OO_{q',q}(G)/\OO_{q'}(G))\le \OO_{q',q}(G)/\OO_{q'}(G).$$ Since $Q$ is abelian,  $[Q,\OO_{q',q}(G)]\leq\OO_{q'}(G)$ and so $Q\le \OO_{q',q}(G)$ which implies that $G=Q\OO_{q'}(G)$ as $\OO^{q'}(G)=G.$ Now the result follows by Lemma \ref{lem4}.
\end{proof}

We will provide examples of groups that meet the conclusion of Theorem A and the conditions of Manz and Wolf, yet have irreducible $p$-Brauer characters whose degrees are divisible by $q$.  Thus, to state our characterization, we need one further condition beyond the one stated in Theorem A and the conditions found by Manz and Wolf.  

To state this condition, we introduce the following notation. Let $M\unlhd G$ and $N\unlhd M.$  We define $$\Centralizer_G(M/N)=\{g\in G\,:\, [g,M]\subseteq N\}.$$  Note that we are not assuming that $N$ is normal in $G$.  If $M/N$ is abelian, then $M$ is contained in $\Centralizer_G(M/N)$.  


\begin{lemma}\label{lem5} Let $M\unlhd G$ and let $N\unlhd M.$

\begin{enumerate}[$(1)$]
\item $\Centralizer_G(M/N)$ is a subgroup of $G.$
\item $N \unlhd \Centralizer_G (M/N).$
\item If $M\le K\le G$ and $K'\leq N,$ then $K\le \Centralizer_G(M/N).$
\end{enumerate}

\end{lemma}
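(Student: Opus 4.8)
The plan is to prove the three parts in order, each being a direct verification using the definition $\Centralizer_G(M/N)=\{g\in G : [g,M]\subseteq N\}$. The main tool throughout is the standard commutator identity $[xy,m]=[x,m]^y[y,m]$ (and its companion $[x,m]^{-1}=[x^{-1},m^x]$), together with the fact that $M\unlhd G$ guarantees $[g,m]=g^{-1}m^{-1}gm\in M$ for all $g\in G$, $m\in M$, so that the condition $[g,M]\subseteq N$ is a condition about elements of $M$ lying in $N$.

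For part (1), I would show $\Centralizer_G(M/N)$ is closed under multiplication and inverses and contains the identity. Closure is the crux: given $g,h\in\Centralizer_G(M/N)$ and $m\in M$, expand $[gh,m]=[g,m]^h[h,m]$. Here $[h,m]\in N$ by hypothesis, and $[g,m]\in N$ by hypothesis; since $N\unlhd M$ and $[g,m]\in M$, the conjugate $[g,m]^h$ lies in $N$ provided $h$ normalizes $N$ inside $M$. This is exactly where part (2) is needed, so I would in fact establish the normalization fact first. For inverses, $[g^{-1},m]=\left([g,m^{g^{-1}}]\right)^{-1}$ where $m^{g^{-1}}\in M$ (as $M\unlhd G$), so $[g,m^{g^{-1}}]\in N$ and its inverse lies in $N$ since $N$ is a subgroup.

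For part (2), I must show $N\unlhd\Centralizer_G(M/N)$. First $N\subseteq\Centralizer_G(M/N)$: for $n\in N$ and $m\in M$ we have $[n,m]\in N$ because $N\unlhd M$ forces $[N,M]\subseteq N$ (both $n^{-1}$ and its $m$-conjugate lie in $N$). Then normality: for $g\in\Centralizer_G(M/N)$ and $n\in N$, I need $n^g\in N$. Since $g$ normalizes $M$ (as $M\unlhd G$) and $[g,n^{-1}]\in N$ by the defining condition applied to $n^{-1}\in M$, rewriting $n^g=n[n,g]$ and controlling $[n,g]\in N$ shows $n^g\in N\cdot N=N$. This settles the normalization used in part (1); I would reorganize the writeup so that part (2)'s containment and normality are available before completing closure in part (1).

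For part (3), given $M\le K\le G$ with the derived subgroup $K'\le N$, I want $K\le\Centralizer_G(M/N)$. For any $k\in K$ and $m\in M\subseteq K$, the commutator $[k,m]$ lies in $K'$ by definition, hence $[k,m]\in N$, so $k\in\Centralizer_G(M/N)$; this is essentially immediate. The main obstacle is purely organizational: the closure argument in part (1) secretly requires the normalization statement from part (2), so the cleanest route is to prove the two containments and the normality of part (2) as a self-contained commutator computation first, then harvest them for the subgroup closure, rather than presenting the parts in their stated numerical order inside a single coherent argument.
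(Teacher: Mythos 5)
Your proof is correct, and it is in essence the same direct verification from the definition that the paper gives; parts (2) and (3) match the paper's argument almost verbatim. The one genuine difference is in the closure step of (1). You expand $[gh,m]=[g,m]^h[h,m]$, which forces you to conjugate an element of $N$ by $h$ and hence to first establish that elements of $\Centralizer_G(M/N)$ normalize $N$ --- this is why you must reorder and prove the containment and normality statements of (2) before completing (1). The paper instead uses the variant identity $[g_1g_2,x]=[g_2,x^{g_1}][g_1,x]$, which shifts the conjugation onto $x\in M$; since $M\unlhd G$ gives $x^{g_1}\in M$, both factors land in $N$ directly from the defining condition, no normalization of $N$ is needed, and the parts can be proved in their stated order. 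Both routes are equally elementary; yours just costs the reorganization you describe, and your handling of it is sound (your proof of (2) uses only the definition and $N\unlhd M$, so there is no circularity). A minor further difference: the paper verifies only closure under multiplication, which suffices because $G$ is finite, whereas you also check inverses explicitly --- harmless here, and necessary only in the infinite case.
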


\begin{proof}
For (1), it suffices to show that $\Centralizer_G(M/N)$ is closed under multiplication. Observe that if $g_1,g_2\in \Centralizer_G(M/N)$ and $x\in M,$ then \[[g_1g_2,x]=[g_2,x^{g_1}][g_1,x]\in [g_2,M]\cdot [g_1,M]\subseteq N\cdot N=N.\] So $g_1g_2\in \Centralizer_G(M/N).$  Since $N$ is normal in $M$, we have $[N,M] \le N$, so $N \le \Centralizer_G (M/N)$.   Notice that $[\Centralizer_G(M/N),N] \le [\Centralizer_G(M/N),M] \le N$, so $N$ is normal in $\Centralizer_G(M/N)$ yielding (2).  For (3), suppose that  $M\leq K\le G$ with $K'\le N.$ Then $K\leq \Centralizer_G(M/N)$ since $[K,M]\le [K,K]=K'\le N.$
\end{proof}


We now turn to our characterization of the $p$-solvable groups $G$ where $q \nmid \varphi (1)$ for all $\varphi \in \IBr (G)$ and $q \ne p$.  When $G$ is a finite group and $p$ is a prime, $\OO_p(G)$ is contained in the kernel of all $p$-Brauer characters of $G.$  So there is no loss in assuming that $\OO_p(G)=1.$  We note that (1) is the condition from Theorem A, and condition (2) is the condition of Manz and Wolf that appeared in Theorem B.  Observe that condition (4) (a) implies that $Q' \le \OO_q (L)$. Combined with (3), this implies that $G$ has a metabelian Sylow $q$-subgroup and that a $q$-series for $G$ will have abelian $q$-factors.  It not difficult to see since $G/\OO_q (L)$ has an abelian Sylow $q$-subgroup that it has $q$-length at most $1$.  Hence, the conditions here imply the conditions of Manz and Wolf.   

\begin{theorem}\label{th:characterization}
Let $p$ and $q$ be distinct primes and suppose that $G$ is $p$-solvable with $\OO_p(G)=1$. Let $L=\OO^{q'}(G)$ and let $Q$ be a Sylow $q$-subgroup of $G$. Then $q\nmid\varphi(1)$ for all $\varphi\in\IBr(G)$ if and only if the following conditions hold:
\begin{enumerate}[$(1)$]
\item $x^G\cap \N_G(Q)\neq\emptyset$ for all $p$-regular elements $x\in G;$
\item $L$ is solvable;
\item $\OO_q (L)$ is abelian;
\item For every normal subgroup $N$ of $\OO_q(L)$ with $\OO_q(L)/N$ cyclic, the following hold:
  \begin{enumerate}
    \item there exists an element $g\in L$ such that $(Q^g)'\leq N$
    \item Every $p$-regular conjugacy class of $C/\OO_q (L)$ meets $\N_{C/\OO_q (L)}(Q^g/\OO_q (L))$, where $C = \Centralizer_L (\OO_q (L)/N).$
  \end{enumerate}
\end{enumerate}
\end{theorem}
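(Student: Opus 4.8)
The plan is to pass to $L=\OO^{q'}(G)$ and analyze $\IBr(L)$ by Clifford theory over the abelian normal $q$-subgroup $V=\OO_q(L)$. Since $G/L$ is a $q'$-group it is $p$-solvable, so Lemma~\ref{lem1} reduces the Brauer-degree condition for $G$ to the same condition for $L$; note also $Q\le L$, so $Q\in\Syl_q(L)$ and $V\le Q$. In the forward direction condition~(1) is exactly Theorem~A, condition~(2) is Lemma~\ref{lem2}(1), and for the reverse direction I would feed condition~(1) into Lemma~\ref{lem6} to obtain the analogous statement inside $L$. Everything is thereby reduced to characterizing, intrinsically in $L$, when $q\nmid\beta(1)$ for all $\beta\in\IBr(L)$.

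Next I would establish condition (3) and the auxiliary fact $Q'\le V$, both needed before the main reduction. For (3): if $\theta\in\IBr(V)=\Irr(V)$ and $\beta\in\IBr(L\mid\theta)$, then $\theta(1)\mid\beta(1)$ by Clifford theory, and since $\theta(1)$ is a power of $q$, the hypothesis $q\nmid\beta(1)$ forces $\theta(1)=1$; as $\theta$ is arbitrary, $V$ is abelian. The inclusion $Q'\le V=\OO_q(L)$ I would extract from the Manz--Wolf structure recorded in Lemma~\ref{lem2}: with $\OO_p(G)=1$ one has $\OO_{p,q}(G)=\OO_q(G)$, so $G/\OO_q(G)$ has $q$-length at most $1$ with abelian $q$-factor, whence its Sylow $q$-subgroup is abelian and $Q'\le\OO_q(L)$.

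The heart of the argument is a per-character analysis. Fix $\beta\in\IBr(L)$ lying over $\lambda\in\Irr(V)$; since $V$ is abelian, $\lambda$ is linear, its kernel $N$ satisfies $V/N$ cyclic, and $C:=\Centralizer_L(V/N)$ is precisely the inertia group $I_L(\lambda)$, because $g$ fixes $\lambda$ iff $[V,g]\le\ker\lambda=N$. By Clifford correspondence (\cite[Theorem~8.9]{Navarro}), $\beta=\psi^L$ for a unique $\psi\in\IBr(C\mid\lambda)$ with $\beta(1)=|L:C|\,\psi(1)$. I claim that $q\nmid\beta(1)$ for all $\beta$ over such $\lambda$ is equivalent to (4)(a)--(b) for this $N$. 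First, restricting $\psi$ to a Sylow $q$-subgroup $P=Q^{g}\le C$ (available once $q\nmid|L:C|$), every constituent lies over $\lambda$ and has $q$-power degree, so $q\nmid\psi(1)$ forces $\lambda$ to extend to a linear character of $P$; combined with $Q'\le V$ this yields $(Q^{g})'\le N$, which is (4)(a), and conversely $(Q^{g})'\le N$ gives $Q^{g}\le C$ and hence $q\nmid|L:C|$. Second, given (4)(a), the character $\lambda$ (equivalently the faithful $\bar\lambda$ on the central cyclic $V/N\le Z(C/N)$) extends to the abelian Sylow $q$-subgroup $P/N$ and, freely, to every Sylow $\ell$-subgroup for $\ell\ne q$; by Isaacs' extension criterion it extends to $C/N$, so Gallagher's theorem gives a degree-preserving bijection $\IBr(C\mid\lambda)=\IBr(C/N\mid\bar\lambda)\leftrightarrow\IBr(C/V)$. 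Since $P/V$ is an abelian Sylow $q$-subgroup of the solvable group $C/V$, Theorem~B applied to $C/V$ translates ``$q\nmid\psi(1)$ for all $\psi\in\IBr(C\mid\lambda)$'' into exactly condition (4)(b).

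Assembling these, both implications follow by letting $N$ range over all normal subgroups of $\OO_q(L)$ with cyclic quotient, which are exactly the kernels of the linear characters of $V$: in the forward direction the $q'$-degree hypothesis yields (4)(a)--(b) for each $N$, while conversely (4) together with (2)--(3) forces $q\nmid\beta(1)$ for every $\beta\in\IBr(L)$, and then Lemma~\ref{lem1} returns the conclusion for $G$. The step I expect to be most delicate is the degree-preserving bijection $\IBr(C\mid\lambda)\leftrightarrow\IBr(C/V)$: one must verify that $\lambda$ genuinely extends all the way to $C/N$ (not merely to its Sylow $q$-subgroup) and that the extension together with the Gallagher correspondence is compatible with restriction to $p$-regular elements, and one must be careful that (4)(a) delivers $(Q^{g})'\le N$ rather than only $(Q^{g})'\cap V\le N$, which is precisely where the input $Q'\le\OO_q(L)$ is essential.
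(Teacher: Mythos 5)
Your proposal is correct and follows essentially the same route as the paper: reduce to $L=\OO^{q'}(G)$ via Lemma~\ref{lem1}, identify $\Centralizer_L(\OO_q(L)/N)$ with the inertia group of a linear character $\lambda$ of $\OO_q(L)$ with kernel $N$, extend $\lambda$ to that inertia group by the Sylow-by-Sylow extension criterion, and convert the degree condition on $\IBr(C\mid\lambda)$ into Theorems A and B applied to $C/\OO_q(L)$ via the Gallagher correspondence. The only (harmless) local deviations are that you obtain $(Q^g)'\le N$ by restricting a $q'$-degree Brauer character to the Sylow $q$-subgroup and extracting a linear constituent over $\lambda$, where the paper routes through the normal $q$-complement of the inertia group, and that you treat the converse uniformly by allowing $N=\OO_q(L)$ in condition (4), where the paper separately handles the characters containing $\OO_q(L)$ in their kernel.
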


\begin{proof}

(\textbf{A}) Suppose that $q\nmid \varphi(1)$ for all $\varphi\in\IBr(G).$ We will show that $(1)-(4)$ hold. From Lemma \ref{lem1}, $q\nmid \varphi(1)$ for all $\varphi\in\IBr(L)$ since $L\unlhd G.$ The first three conditions follow from Lemma \ref{lem2} and Theorem A.

We now prove (4). Since $\OO_p(G)=1,$ we have $\OO_p(L)=1$ and so $\OO_{p,q}(L)=\OO_q(L)$.  Thus by Lemma \ref{lem2} (iii), $L/\OO_q(L)$ has a normal $q$-complement $K/\OO_q(L)$ where $K=\OO_{q,q'}(L).$

Let $N\unlhd \OO_q(L)$ with $\OO_q(L)/N$ cyclic. As $\OO_q(L)$ is an abelian $q$-group, we have $$\IBr(\OO_q(L))=\Irr(\OO_q((L)))\cong \OO_q(L).$$ So, there exists a Brauer character $\theta\in\IBr(\OO_q(L))$ such that $N=\Ker(\theta).$ Let $J=I_L(\theta).$ For every Brauer character $\beta\in\IBr(J \mid \theta)$, we have that $\beta^L\in\IBr(L)$ and $q\nmid \beta^L(1)=|L:J|\beta(1).$ So $J$ must contain a Sylow $q$-subgroup of $L$, hence $Q^g\leq J$ for some $g\in L$ and $q\nmid \beta(1).$ Now $J/\OO_q(L)$ has a normal $q$-complement $J_1/\OO_q(L)$ and $J=Q^gJ_1$ so $Q^g\cap J_1=\OO_q(L)$ and $J/J_1\cong Q^g/\OO_q(L).$ We see that $\theta$ extends to $\lambda\in\IBr(J_1)$ and $\lambda$ is also extendible to $\theta_0\in\IBr(J)$ as $q\nmid \beta(1)$ for every $\beta\in\IBr(J \mid \theta)$. Since $\theta_0$ is linear and $Q^g/\OO_q(G)$ is abelian, we deduce that $$(Q^g)'\leq \Ker(\theta_0)\cap \OO_q(L)\le \Ker(\theta)=N.$$

\medskip
We next claim that $J=\Centralizer_L(\OO_q(L)/N).$ Let $C:=\Centralizer_L(\OO_q(L)/N).$ Suppose first that $g\in J.$ Then $\theta^g=\theta$ so $\theta^g(x)=\theta(x)$ for all $x\in \OO_q(L).$ It follows that $gxg^{-1}x^{-1}\in \Ker(\theta)=N$ for all $x\in\OO_q(L)$ or equivalently $[g^{-1},M]\subseteq N$ hence $g^{-1}\in C$ and thus $g\in C.$ Conversely, suppose that $g\in C.$ Then $g^{-1}\in C$ and thus $[g^{-1},\OO_q(L)]\subseteq \Ker(\theta).$ Then $gxg^{-1}x^{-1}\in \Ker(\theta)$ for all $x\in \OO_q(L)$ which implies that $\theta^g=\theta$ or $g\in J.$ Therefore $J=\Centralizer_L(\OO_q(L)/N)$ as wanted.

\medskip
From the previous discussion, we know that $\theta$ extends to $\theta_0\in\Irr(J).$ So $\theta_0\mu\in\IBr(J \mid \theta)$ for all $\mu\in\IBr(J/\OO_q(L))$ by \cite[Corollary 8.20]{Navarro}. As $(\theta_0\mu)^L\in\IBr(L)$ and $q\nmid (\theta_0\mu)^L(1)=|L:J|\mu(1)$, we deduce that $q\nmid \mu(1)$ for all $\mu \in \IBr(J/\OO_q(L)).$ By Theorem A, we see that the pair $(J/\OO_q(L),Q^g/\OO_q(L))$ satisfies the last part of condition (4).  Notice since $Q^g\le J$ that $Q^g/\OO_q (L)$ will be a Sylow $q$-subgroup of $J/\OO_q (L)$.  This completes the proof of condition (4).

\medskip
(\textbf{B})
For the converse, suppose that (1)-(4) hold. We will show that $q\nmid\varphi(1)$ for all Brauer characters $\varphi \in \IBr(G).$ By Lemma \ref{lem1}, we can assume that $G=L=\OO^{q'}(G).$ From (2), $G$ is solvable.  By (3), $\OO_q(G)$ is abelian.

Notice that (4) implies that $Q' \le \OO_q (G)$, and so, $G/\OO_q (G)$ has an abelian Sylow $q$-subgoup.  Also, $G/\OO_q(G)$ is solvable.  Furthermore, (1) holds for $G/\OO_q(G)$  by Lemma \ref{D-property}(2). Thus $q\nmid \beta(1)$ for all $\beta\in\IBr(G/\OO_q(G))$ by Theorem \ref{th:abelian}.

Now it suffices to show that every $\beta\in\IBr(G)$ which does not contain $\OO_q(G)$ in its kernel has $q'$-degree. Let $\beta\in\IBr(G)$ be such a character and let $\theta\in\IBr(\OO_q(G))$ be an irreducible constituent of $\beta$ when restricted to ${\OO_q (G)}$. Let  $N=\Ker(\theta).$ Observe that $\theta$ is a nontrivial character of $\OO_q(G).$  Since $\OO_q(G)$ is abelian, the quotient $\OO_q(G)/N$ is cyclic.  Let $C = \Centralizer_G(\OO_q(G)/N)$.  By Lemma \ref{lem5}, we know that $N$ is normal in $C$, and from the definition of $C$, we see that $M/N$ is central in $C/N$.  This implies that $C$ stabilizes $\theta$. Conversely, suppose that $x \in G$ stabilizes $\theta$.  Then $\theta ([m,x]) = \theta (m^{-1}) \theta (m^x) = \theta (m^{-1}) \theta (m) =  \theta (1)$ for all $m \in M$.  This implies that $[M,x] \le \Ker (\theta) = N$, and so, $x \in C$.  It follows that $C$ is the stabilizer of $\theta$ in $G$.

By (4) we have $(Q^g)'\leq N$ for some $g\in G$.  Applying Lemma \ref{lem5} (3), $C$ contains a Sylow $q$-subgroup $Q^g$ of $G$.  Since $Q^g/N$ is abelian, $\theta$ extends to a Sylow $q$-subgroup of the quotient $C/\OO_q (G).$  Furthermore, for every prime $r\neq q$ and $R/\OO_q (G)$ a Sylow $r$-subgroup of $C/\OO_q (G),$ we see that $\theta$ extends to $R$ since $(|\OO_q(G)|,|R:\OO_q(G)|)=1$ (see \cite[Theorem 8.23]{Navarro}).  By \cite[Theorem 8.29]{Navarro}, $\theta$ extends to $\theta_0\in\IBr(C)$ and thus all irreducible constituents of $\theta^C$ have the form $\theta_0\lambda$ where $\lambda\in\IBr(C/\OO_q(G))$ by \cite[Corollary 8.20]{Navarro}. Applying \cite[Theorem 8.9]{Navarro}, we conclude that $\beta = (\theta_0\eta)^G$ for some $\eta \in \IBr(C/\OO_q(G))$.

Now, in light of (4), the group $C/\OO_q(G)$ satisfies all the hypotheses of Theorem \ref{th:abelian}, so $q\nmid \lambda(1)$ for all $\lambda\in\IBr(C/\OO_q(G))$ and thus $q\nmid (\theta_0\eta)^G (1) = \beta (1)$.

Therefore, we can conclude that $q\nmid \varphi(1)$ for all $\varphi\in\IBr(G).$
\end{proof}

This gives a group theoretical characterization of finite $p$-solvable groups whose all irreducible $p$-Brauer characters have $q'$-degree. This answers positively Problem 3.2 in \cite{N1}.

\section{Examples}

In the example below, we show that the solvable assumption on $G$ in Theorem A is necessary.

Let $p$ be a prime. We refer the readers to \cite{Lubeck} for some basic information on modular representation theory of $\SL_2(p^f)$ in defining characteristic $p,$ where $f\ge 1.$

It follows from Remark 4.5 \cite{Lubeck} that every $p$-modular irreducible representation of $\SL_2(p)$ has degree $k+1$ for some $k$ with $0\le k\le p-1$ and for odd $p,$ it is unfaithful if and only if $k+1$ is odd. The $p$-modular irreducible representations of $\SL_2(p^f)$ are then obtained by using Steinberg's tensor product theorem (see \cite[Theorem~2.2]{Lubeck}). It follows that every $p$-Brauer character degree of $\SL_2(p^f)$ is a product of at most $f$ Brauer character degrees of $\SL_2(p).$

Now, if $p=2,$ then all irreducible $p$-Brauer characters of $\SL_2(2^f)$ have $2$-power degrees since every irreducible $p$-Brauer characters of $\SL_2(2)$ has degree $1$ or $2.$

Denote by $\cd_p(G)$ the degrees of irreducible $p$-Brauer characters of $G.$
Assume $p>2.$ Then $\cd_p(\SL_2(p))$ consists of all integers from $1$ to $p;$ and $\cd_p(\PSL_2(p))$ consists of all odd integers from $1$ to $p.$ Finally, $\cd_p(\SL_2(p^2))$ consists of all integers of the form $ab$ where $a,b\in\{1,2,\cdots,p\}.$

With these results on $p$-Brauer character degrees of $\SL_2(p^f)$ and $\PSL_2(p)$, we have the following.

\begin{example}
Let $p$ and $q$ be distinct primes and let $f\ge 1$ be an integer.
\begin{enumerate}
\item Assume $f\ge 4$, $p=2$, and $q$ is a prime divisor of $2^{f}+1.$ Let $G=\SL_2(2^f)$ and $Q\in\Syl_q(G).$ Then $\N_G(Q)\cong {\rm{D}}_{2(2^f+1)} $ and all irreducible $2$-Brauer characters of $G$ have $2$-power degree.  In particular, $q\nmid \varphi(1)$ for all $2$-Brauer characters $\varphi\in\IBr(G)$, but $\N_G(Q)$ contains no element of order $2^f-1.$

\item Let $p\ge 5$ be a prime and let $q$ be a prime divisor of $p^2+1$ such that $q>p.$ Let $G=\SL_2(p^2).$ Then $q$ divides the degree of no irreducible $p$-Brauer character of $G.$ However, $\N_G(Q)\cong {\rm{D}}_{p^2+1}\cdot 2$ contains no $p$-regular element of order $p^2-1.$

\item Let $p$ be a prime of the form $2^f\pm 1\ge 17$ and let $G=\PSL_2(p).$ Then all irreducible $p$-Brauer characters of $G$ have odd degree and that the Sylow $2$-subgroup $Q$ of $G$ is maximal in $G.$ Then $2\nmid \varphi(1)$ for all $\varphi\in\IBr(G)$ but $\N_G(Q)=Q$ contains no odd $p$-regular element of $G.$
\end{enumerate}
\end{example}

Next, we have examples of $p$-solvable groups $G$ where $q \nmid \varphi (1)$ for all $\varphi \in \IBr (G)$ and $G$ has a nonabelian Sylow $q$-subgroup.  The easiest example is to take $p = 3$, $q = 2$, and $G = \Sym_4$.  Let $V$ be the additive group of the field $\textbf{F}$ of order $3^3$, let $C$ be the subgroup of the multiplicative group of $\textbf{F}$ having order $13$, and let $A$ be the Galois group of $\textbf{F}$ over $\mathbb{Z}_3$.  Now, $C$ acts on $V$ and $A$ acts on $VC$, and we take $G$ to be $VCA$.  With $p = 13$ and $q = 3$, we see that the irreducible $13$-Brauer characters have degrees $1$ and $13$ and $G$ has a nonabelian Sylow $3$-subgroup.

Finally, we present examples of groups that satisfy the conclusion of Theorem A and the conditions of Manz and Wolf, yet have irreducible $p$-Brauer characters whose degrees are divisible by $q$.  We begin by noting that all $\{ p, q \}$-groups trivially satisfy the conclusion of Theorem A since the $p$-regular elements will have $q$-power order and hence necessarily be conjugate to elements of the given Sylow $q$-subgroup.  Also, by Burnside's theorem, we know that any $\{ p, q \}$-group is necessarily solvable.  Thus, it suffices to find a $\{ p, q \}$-group $G$ where in the $q$-series for $G$, the $q$-factors are abelian, the $q$-length of $G/{\bf O}_{p,q} (G)$ is at most $1$, and the Sylow $q$-subgroups are metabelian, and there exists a $p$-Brauer character whose degree is divisible by $q$. A specific example when $p = 3$ and $q = 2$ can be found by taking the semidirect product of $\Sym_3$ acting on two copies of the Klein $4$-group where the action found in $\Sym_4$.  Obviously, a Sylow $2$-subgroup is metabelian, the $2$-factors in $2$-series for $G$ will be abelian, and $G$ will have $2$-length $1$.  Finally, it is not difficult to see that there exist irreducible $3$-Brauer characters for $G$ that have degree $6$.  We note that there is nothing particular about $3$ and $2$ that are needed for an example.  We claim that for any two distinct primes $p$ and $q$, the iterated wreath product of $\bbZ_q$ by $\bbZ_p$ and then $\bbZ_q$ again will yield an example, but we leave the details to the reader.

\end{document}